\theoremstyle{plain}
 \newtheorem{thm}{\textbf{Theorem}}[section]
 \newtheorem{prop}[thm]{\textbf{Proposition}}
 \newtheorem{lem}[thm]{\textbf{Lemma}}
 \newtheorem{cor}[thm]{\textbf{Corollary}}
\theoremstyle{definition}
 \newtheorem{dfn}[thm]{\textbf{Definition}}
\theoremstyle{remark}
 \newtheorem{rem}[thm]{\textbf{Remark}}
 \numberwithin{equation}{section}
\newcommand\coveredby{\mathrel{\ooalign{$<$\cr
  \hidewidth\raise0.0ex\hbox{$\cdot\mkern2mu$}\cr}}}
\newcommand\covers{\mathrel{\ooalign{$>$\cr
  \hidewidth\raise0.0ex\hbox{$\cdot\mkern7mu$}\cr}}}
\renewcommand{\leq}{\leqslant}
\renewcommand{\geq}{\geqslant}
\renewcommand{\setminus}{\smallsetminus}
\title[q-analogue and symmetric function analogue of a C-S-V result]{A q-analogue and a symmetric function analogue of a result by Carlitz, Scoville and Vaughan}
\author{\bfseries Yifei Li}
\begin{document}

\begin{abstract}
We derive an equation that is analogous to a well-known symmetric function identity: $\sum_{i=0}^n(-1)^ie_ih_{n-i}=0$. Here the elementary symmetric function $e_i$ is the Frobenius characteristic of the representation of $\cS_i$ on the top homology of the subset lattice $B_i$, whereas our identity involves the representation of $\cS_n\times \cS_n$ on the Segre product of $B_n$ with itself. We then obtain a q-analogue of a polynomial identity given by Carlitz, Scoville and Vaughan through examining the Segre product of the subspace lattice $B_n(q)$ with itself. We recognize the connection between the Euler characteristic of the Segre product of $B_n(q)$ with itself and the representation on the Segre product of $B_n$ with itself by recovering our polynomial identity from specializing the identity on the representation of $\cS_i\times \cS_i$.
\end{abstract}

\maketitle
\vspace{18mm}
\setcounter{page}{1}
\thispagestyle{empty}

\section{Introduction} \label{Intro}

Consider the power series $f(z)={\displaystyle \sum_{n=0}^{\infty} (-1)^n\frac{z^n}{n!n!}}$ and define the numbers $\Go_0$, $\Go_1$, $\Go_2$,...by ${\displaystyle \frac{1}{f(z)}=\sum_{n=0}^{\infty}\Go_n\frac{z^n}{n!n!}}$. It follows quickly from the definition that 
\beq \label{CSV_intro}
\sum_{k=0}^n(-1)^k {n \choose k}^2 \Go_k=0.
\eeq
Given $\Gs\in \cS_n$ a permutation of $[n]$. A number $i\in [n-1]$ is called an \emph{ascent} of $\Gs$ if $\Gs (i)<\Gs (i+1)$. Carlitz, Scoville and Vaughan \cite{CSV} proved that the number $\Go_k$ in equation \eqref{CSV_intro} is the number of pairs of permutations of $\cS_k$ with no common ascent. For example, $\Go_2=3$: $(12,21)$, $(21,12)$, $(21,21)$. The Bessel function $J_0(z)$ is essentially $f(z^2)$. Carlitz, Scoville and Vaughan's result provided a combinatorial interpretation of the coefficient $\Go_k$ in the reciprocal Bessel function. 

Recall that $[n]_q:=q^{n-1}+q^{n-2}+...+1$ is the $q$-analogue of the natural number $n$ and ${\displaystyle [n]_q!:=\prod_{i=1}^{n}{[i]_q}}$. Then the $q$-analogue of ${\displaystyle {n \choose k}}$ is ${\displaystyle {n \brack k}_q:=\frac{[n]_q!}{[k]_q![n-k]_q!}}$. For a permutation $\Gs\in \cS_n$, the \emph{inversion statistic} is defined by $$inv(\Gs):=|\{(i,j):1\leq i<j\leq n \mbox{ and } \Gs(i)>\Gs(j)\}|.$$ In the paper we will prove the following $q$-analogue of Carlitz, Scoville and Vaughan's result. Let $\cD_n$ denote the set $\{(\Gs,\Go)\in \cS_n\times\cS_n \mbox{ }|\mbox{ }\Gs\mbox{ and } \Go \mbox{ have no common ascent}\}$. Then 
\beq\label{W_n(q)_intro}
\sum_{i=0}^{n}{{n\brack i}_q^2 (-1)^i W_i(q)}=0
\eeq
and $W_i(q)=\sum_{(\Gs,\Go)\in \cD_i}{q^{inv(\Gs)+inv(\Go)}}$. Put $F(z)={\displaystyle \sum_{n=0}^{\infty} (-1)^n\frac{z^n}{[n]_q![n]_q!}}$. The function $F\Big((\frac{z}{2(1-q)})^2\Big)$ is the $q$-Bessel function $J_0^{(1)}(z;q)$. The $q$-Bessel functions were first introduced by F. H. Jackson in 1905 and can be found in later literature (see Gasper and Rahman \cite{Gasper}). The number of descending maximal chains $W_n(q)$ satisfies ${\displaystyle \frac{1}{F(z)}= \sum_{n=0}^{\infty}W_n(q)\frac{z^n}{[n]_q![n]_q!}}$, giving the coefficients of the reciprocal $q$-Bessel function a combinatorial meaning. We obtained our results through examining the Segre product of the subspace lattice $B_n(q)$ with itself and the representation of $\cS_n\times \cS_n$ on top homology of Segre product of the subset lattice $B_n$ with itself. Let us review the definition of the Segre product poset. 

\begin{dfn} \label{Segre} (Bj\"orner and Welker, \cite{Segre_rees}) 
Segre products of posets: Let $f: P \longrightarrow S$ and $g:Q\longrightarrow S$ be poset maps. Let $P\circ_{f,g}Q$ be the induced subposet of the product poset $P\times Q$ consisting of the pairs $(p,q)\in P\times Q$ such that $f(p)=g(q).$
Let $S=\NN$, the set of the natural numbers. When $P$ is a pure poset with a rank function $f=\rho$, $P\circ_{\rho,g}Q$ is the Segre product of P and Q with respect to $g$, and we denote it by $P\circ_g Q$.   
\end{dfn}

One important fact about the Segre product poset, due to Bj\"orner and Welker \cite{Segre_rees}*{Theorem 1}, is the following: $P\circ_g Q$ is Cohen-Macaulay over field $k$ provided that both $P$ and $Q$ are Cohen-Macaulay over $k$ and $g(Q)\subset \rho(P)$ with $g$ being a strict poset map. For our purpose, $P$ and $Q$ are the same poset (either $B_n$ or $B_n(q)$), which is pure with a rank function $\rho$. Because of the Cohen-Macaulayness of $B_n\circ_{\rho}B_n$ and $B_n(q)\circ_{\rho}B_n(q)$, those posets are well behaved, which motivated us to investigate the representation of $\cS_n\times\cS_n$ on the homology of $B_n\circ_{\rho}B_n$ and related Whitney homology groups.

We observed that $W_n(q)$ is in fact the Euler characteristic of the Segre product poset $B_n(q)\circ_{\rho}B_n(q)$. Equation \eqref{W_n(q)_intro} gave us the first hint to an analogue of a well-known symmetric function identity: for $n\geq 1$, $\sum_{i=0}^n(-1)^ie_ih_{n-i}=0$. In section 2 we defined the product Frobenius characteristic map to serve as a useful tool in studying representations on $\cS_n\times\cS_n$ and proved a few properties of the map. Then using this tool we derived our symmetric function analogue in section 3 Theorem \ref{Stanley q-form}. In section 4, we present our initial finding, Theorem \ref{q-form}, the $q$-analogue to Carlitz, Scoville, and Vaughan's result. We show the relation of the Euler characteristic $W_n(q)$ with the representation of $\cS_n\times\cS_n$ on top homology of $B_n\circ_{\rho}B_n$ in Theorem \ref{sp_ch_pn}. We recognize that, combined with Theorem \ref{sp_ch_pn}, specializing the symmetric function analogue will recover the $q$-analogue to Carlitz, Scoville, and Vaughan's result. Finally in section 5, we provide an alternative proof for that result.

\section{The product Frobenius characteristic map}

The Frobenius characteristic map is often used to study a representation of the symmetric group. Here we define a product Frobenius characteristic map to help understand representations of $\cS_n\times \cS_n$. Let us consider two sets of variables $x=(x_1, x_2,...)$ and $y=(y_1, y_2,...)$. Let $\cR^n$ be the space of class functions on $\cS_n$ and $\cR=\oplus_n\cR^n$. Also $\Lambda(x)=\oplus_n\Lambda^n(x)$ and $\Lambda(y)=\oplus_n\Lambda^n(y)$ denote the the rings of symmetric functions in variables $(x_1, x_2,...)$ and $(y_1,y_2,...)$ respectively.

\begin{dfn} 
Let $\chi$ be a class function on $\cS_m\times\cS_n$. The product Frobenius characteristic map $ch: \cR\times \cR\longrightarrow\Lambda(x) \times\Lambda(y)$ is defined as: 
\beq \label{prod_ch_formula}
ch(\chi)=\sum_{(\mu,\lambda)\vdash (m,n)}z_{\mu}^{-1}z_{\lambda}^{-1}\chi_{(\mu,\lambda)}p_{\mu}(x)p_{\lambda}(y),
\eeq
where $\chi (\mu,\lambda)$ is the value of $\chi$ on the class $(\mu,\lambda)$ and $p_{\mu}$, $p_{\lambda}$ are power sum symmetric functions. The class $(\mu,\lambda)$ is indexed by a partition $\mu$ of $m$ and a partition $\lambda$ of $n$ that tell us the cycle shapes of elements of $\cS_m$ and $\cS_n$ respectively.
\end{dfn}

The irreducible representations of $\cS_n\times \cS_m$ are of the form $A^{(i)}\otimes B^{(j)}$, where $A^{(i)}$ and $B^{(j)}$ are each irreducibles of $\cS_n$ and $\cS_m$ respectively (Sagan \cite{Sagan}*{Theorem 1.11.3}). A representation $V$ of $\cS_n\times \cS_m$ can then be decomposed into a sum of irreducibles of $\cS_n\times\cS_m$.

\begin{prop} \label{prod_ch_dfn}
Let $V$, a representation of $\cS_m\times \cS_n$, have the following decomposition: $V={\displaystyle \bigoplus_{i,j}}c_{ij}A^{(i)}\otimes B^{(j)}$, where $A^{(i)}$'s and $B^{(j)}$'s are irreducible representations of $\cS_m$ and $\cS_n$ respectively and $c_{ij}\in\ZZ$. Then the product Frobenius characteristic of $V$ is
$$ch(V)=\sum_{i,j}c_{ij}ch^m(A^{(i)})(x)ch^n(B^{(j)})(y).$$ 
Here $ch^m(A^{(i)})(x)$ is the usual Frobenius characteristic of $A^{(i)}$ in the variable $x$ and $ch^n(B^{(j)})(y)$ is defined similarly.
\end{prop}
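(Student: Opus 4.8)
The plan is to reduce to the case of a single irreducible external tensor product and then exploit the multiplicativity of characters of a direct product. First I would note that the defining formula \eqref{prod_ch_formula} depends $\ZZ$-linearly on the class function $\chi$: writing $\chi = \sum_{i,j} c_{ij}\chi^{A^{(i)}\otimes B^{(j)}}$, where $\chi^{A^{(i)}\otimes B^{(j)}}$ denotes the character of the irreducible $A^{(i)}\otimes B^{(j)}$, the sum passes through the fixed coefficients $z_\mu^{-1}z_\lambda^{-1}$ and the fixed symmetric functions $p_\mu(x)p_\lambda(y)$. Consequently it is enough to prove that $ch(A^{(i)}\otimes B^{(j)}) = ch^m(A^{(i)})(x)\,ch^n(B^{(j)})(y)$ for each pair $(i,j)$ and then sum against $c_{ij}$.

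The key input, which I would draw from the representation theory of direct products, is that the character of $A^{(i)}\otimes B^{(j)}$ factors: on a pair $(\sigma,\tau)\in\cS_m\times\cS_n$ it equals $\chi^{A^{(i)}}(\sigma)\,\chi^{B^{(j)}}(\tau)$. In terms of the cycle-type indexing of conjugacy classes, an element of class $(\mu,\lambda)$ has first coordinate of cycle type $\mu$ and second of cycle type $\lambda$, so the character value on this class is $\chi^{A^{(i)}\otimes B^{(j)}}_{(\mu,\lambda)} = \chi^{A^{(i)}}_\mu\,\chi^{B^{(j)}}_\lambda$, a product of the two ordinary character values.

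Substituting this factorization into \eqref{prod_ch_formula} for $\chi = \chi^{A^{(i)}\otimes B^{(j)}}$, the double sum over $(\mu,\lambda)\vdash(m,n)$ separates. Since the summand is a product of a factor depending only on $\mu$ and the $x$-variables and a factor depending only on $\lambda$ and the $y$-variables, I would rewrite it as
\[
ch(A^{(i)}\otimes B^{(j)}) = \Big(\sum_{\mu\vdash m} z_\mu^{-1}\chi^{A^{(i)}}_\mu\,p_\mu(x)\Big)\Big(\sum_{\lambda\vdash n} z_\lambda^{-1}\chi^{B^{(j)}}_\lambda\,p_\lambda(y)\Big).
\]
Each factor is exactly the classical formula for the ordinary Frobenius characteristic, so the first parenthesized sum is $ch^m(A^{(i)})(x)$ and the second is $ch^n(B^{(j)})(y)$. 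This establishes the single-irreducible case; combining it with the linearity reduction from the first step yields the stated formula.

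I do not expect a genuine obstacle here: the argument is essentially bookkeeping built on two standard facts, linearity of the map and multiplicativity of direct-product characters. The one point requiring care is to verify that every conjugacy class of $\cS_m\times\cS_n$ is of the product form $(\mu,\lambda)$ with $\mu\vdash m$ and $\lambda\vdash n$, so that the factorization of character values is available uniformly across the entire sum; this is immediate from the fact that conjugacy in a direct product is componentwise.
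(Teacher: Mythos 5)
Your proposal is correct and follows essentially the same route as the paper: expand the character using multiplicativity of direct-product characters ($\chi^{A\otimes B}_{(\mu,\lambda)}=\chi^A_\mu\chi^B_\lambda$), substitute into the defining formula \eqref{prod_ch_formula}, and let the double sum over $(\mu,\lambda)\vdash(m,n)$ factor into the two classical Frobenius characteristic sums. Your only cosmetic difference is performing the linearity reduction to a single irreducible first, whereas the paper carries the coefficients $c_{ij}$ through one chain of equalities; the mathematical content is identical.
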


\begin{proof}
Let $\chi$ denote the character of $V$. Let $\chi^{Ai}$ and $\chi^{Bj}$ be the characters of $A^{(i)}$ and $B^{(j)}$ respectively. Then $\chi=\sum_{i,j}c_{ij}\chi^{Ai}\otimes\chi^{Bj}$. By Proposition \ref{prod_ch_dfn}, 
\begin{align*}
ch(V)&=\sum_{(\mu,\lambda)\vdash (m,n)}z_{\mu}^{-1}z_{\lambda}^{-1}\chi (\mu,\lambda)p_{\mu}(x)p_{\lambda}(y)\\
&=\sum_{(\mu,\lambda)\vdash (m,n)}z_{\mu}^{-1}z_{\lambda}^{-1}\sum_{i,j}c_{ij}\chi_{\mu}^{Ai}\chi_{\lambda}^{Bj}p_{\mu}(x)p_{\lambda}(y)\\
&=\sum_{i,j}c_{i,j}\Big(\sum_{\mu\vdash m}z_{\mu}^{-1}\chi^{Ai}_{\mu}p_{\mu}(x)\Big)\Big(\sum_{\lambda\vdash n}z_{\lambda}^{-1}\chi^{Bj}_{\lambda}p_{\lambda}(y)\Big)\\
& =\sum_{i,j}c_{i,j}ch^m(A^{(i)})(x)ch^n(B^{(j)})(y).\\
\end{align*}
The second equality comes from \cite{Sagan}*{Corollary 1.9.4}).
\end{proof}

Because the product Frobenius characteristic map is basically an extension of the usual characteristic map, we keep the notation $ch$ for product Frobenius characteristic map even though $ch$ was previously defined to be $\oplus_nch^n$ in various literature (Sagan \cite{Sagan}, Stanley \cite{ec2}).  The meaning of $ch$ will be clear in the given context.

Let $V$ and $W$ be representations of $\cS_m$ and $\cS_n$ with characters $f$ and $g$. $f\otimes g$ is the character of $V\otimes W$. Recall that the induction product $f\circ g$ is the induction of $f\otimes g$ from $\cS_m\times\cS_n$ to $\cS_{m+n}$. A fundamental property of the usual characteristic map is the following:
\begin{prop} \label{regular_chch} \emph{(Stanley \cite{ec2}*{Proposition 7.18.2})} The Frobenius characteristic map $ch: R\longrightarrow \Lambda$ is a bijective ring homomorphism, i.e., \emph{ch} is one-to-one and onto, and satisfies $$ch(f\circ g)=ch(f)ch(g).$$
\end{prop}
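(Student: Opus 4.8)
The plan is to prove the two assertions separately: first that $ch$ is a linear bijection, and then that it carries the induction product to ordinary multiplication in $\Lambda$. For bijectivity I would argue degree by degree, comparing $\cR^n$ with $\Lambda^n$. The conjugacy classes of $\cS_n$ are indexed by partitions $\mu\vdash n$, so the indicator class functions $\mathbf{1}_\mu$ (equal to $1$ on the class of cycle type $\mu$ and $0$ elsewhere) form a basis of $\cR^n$. Applying the defining formula gives $ch(\mathbf{1}_\mu)=z_\mu^{-1}p_\mu$. Since the power sums $\{p_\mu:\mu\vdash n\}$ are a $\mathbb{Q}$-basis of $\Lambda^n$ and each scalar $z_\mu^{-1}$ is nonzero, the map $ch$ sends a basis to a basis in every degree; hence it is a bijection on all of $\cR=\oplus_n\cR^n$.

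For multiplicativity, the key reformulation is to rewrite the defining sum as an average over the whole group. Using $z_\mu^{-1}=|C_\mu|/n!$, where $C_\mu$ is the conjugacy class of type $\mu$, one obtains
\[
ch(\chi)=\frac{1}{n!}\sum_{w\in\cS_n}\chi(w)\,p_{\mathrm{type}(w)},
\]
where $\mathrm{type}(w)$ denotes the cycle type of $w$. I would then substitute the explicit induced-character formula for $f\circ g=\mathrm{Ind}_{\cS_m\times\cS_n}^{\cS_{m+n}}(f\otimes g)$ into this expression, setting $N=m+n$, and perform the change of variables $v=u^{-1}wu$. Because conjugation preserves cycle type, $p_{\mathrm{type}(w)}=p_{\mathrm{type}(v)}$, so the inner sum over the conjugating element $u\in\cS_N$ contributes only a factor $N!$, collapsing the double average to
\[
ch(f\circ g)=\frac{1}{m!\,n!}\sum_{(v_1,v_2)\in\cS_m\times\cS_n} f(v_1)\,g(v_2)\,p_{\mathrm{type}(v_1,v_2)}.
\]

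The crucial structural fact is that, viewing $(v_1,v_2)$ as a permutation of the disjoint union $[m]\sqcup[n]$, its cycle type is the concatenation of the cycle types of $v_1$ and of $v_2$, so that $p_{\mathrm{type}(v_1,v_2)}=p_{\mathrm{type}(v_1)}\,p_{\mathrm{type}(v_2)}$. Factoring the double sum then yields
\[
ch(f\circ g)=\Big(\tfrac{1}{m!}\sum_{v_1\in\cS_m}f(v_1)p_{\mathrm{type}(v_1)}\Big)\Big(\tfrac{1}{n!}\sum_{v_2\in\cS_n}g(v_2)p_{\mathrm{type}(v_2)}\Big)=ch(f)\,ch(g),
\]
by the averaging identity applied in degrees $m$ and $n$. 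I expect the main obstacle to be the bookkeeping in the change of variables: one must verify that pairs $(w,u)$ with $u^{-1}wu\in\cS_m\times\cS_n$ correspond bijectively to pairs $(u,v)$ with $v\in\cS_m\times\cS_n$, and that the cycle-type multiplicativity of the power sums is invoked on the correct, common set of variables. The remaining steps are routine manipulations of the group-averaging identity.
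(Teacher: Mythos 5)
This proposition is quoted in the paper from Stanley (\cite{ec2}*{Proposition 7.18.2}) and is not proved there, so there is no in-paper argument to compare against; your proof is correct, and it is essentially the standard textbook argument with Frobenius reciprocity unrolled. Concretely: Stanley proves multiplicativity by writing $ch(f)=\langle f,\psi\rangle_{\cS_n}$ where $\psi(w)=p_{\mathrm{type}(w)}$, then applying Frobenius reciprocity to $\mathrm{Ind}_{\cS_m\times\cS_n}^{\cS_{m+n}}(f\otimes g)$ together with the fact that $\psi$ restricted to $\cS_m\times\cS_n$ factors as $\psi\otimes\psi$. Your substitution of the explicit induced-character formula followed by the change of variables $v=u^{-1}wu$ is precisely that reciprocity computation carried out by hand (the bijection $(w,u)\leftrightarrow(u,v)$ you worry about is exactly right: given $u\in\cS_{m+n}$ and $v\in\cS_m\times\cS_n$, the element $w=uvu^{-1}$ is determined, and conjugation preserves cycle type), and your key structural fact --- that the cycle type of $(v_1,v_2)$ acting on $[m]\sqcup[n]$ is the multiset union of the two cycle types, so $p_{\mathrm{type}(v_1,v_2)}=p_{\mathrm{type}(v_1)}p_{\mathrm{type}(v_2)}$ --- is the same one underlying Stanley's factorization of $\psi$. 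Your bijectivity argument via $ch(\mathbf{1}_\mu)=z_\mu^{-1}p_\mu$ is also correct, with one caveat worth noting: because of the denominators $z_\mu^{-1}$ it establishes bijectivity only with $\mathbb{Q}$ (or larger) coefficients, which suffices here since $\cR^n$ is the full space of class functions; the sharper integral statement (virtual characters mapping onto $\Lambda_{\mathbb{Z}}$) would instead use that $ch$ carries irreducible characters to Schur functions. Additivity and preservation of the unit, needed for ``ring homomorphism,'' are immediate from linearity of the defining formula and $ch(1_{\cS_0})=1$.
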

\begin{rem}
Given $V$ a representation of $\cS_m$ with character $f$ and $W$ a representation of $\cS_n$ with character $g$, let $V=\oplus_ia_iA^{(i)}$ and $W=\oplus_jb_jB^{(j)}$ be their decompositions of into irreducibles. It can be easily verified that the product Frobenius characteristic $ch(f\otimes g)=ch(f)(x)ch(g)(y)$. It is a symmetric function in $\Lambda^m\times\Lambda^n$, while the usual Frobenius characteristic $ch(f\circ g)=ch(f)(x)ch(g)(x)$ is a symmetric function in $\Lambda^{m+n}$. 
\end{rem}
We would like the product Frobenius characteristic map to be a homomorphism of rings as well. Given a $\cS_k\times\cS_l$-module $V$ with its character $\psi$ and a $\cS_m\times\cS_n$-module $W$ with its character $\phi$, $\psi\otimes\phi$ is the character of $V\otimes W$, which is a representation of $(\cS_k\times\cS_l)\times(\cS_m\times\cS_n)$. We want to produce a character of $\cS_{k+m}\times\cS_{l+n}$.

\begin{dfn}
For $\psi$ and $\phi$ as given above, we define the \emph{induction product} $\psi\circ\phi$ to be $\psi\otimes\phi\uparrow_{(\cS_k\times\cS_l)\times(\cS_m\times\cS_n)}^{\cS_{k+m}\times\cS_{l+n}}$.
\end{dfn}
\begin{prop} \label{prod_ch}
Assume given $\psi$ a class function on $\cS_k\times\cS_l$, and $\phi$ a class function on $\cS_m\times\cS_n$. The product Frobenius characteristic map $ch: R\times R\longrightarrow \Lambda(x)\times\Lambda(y)$ is a bijective ring homomorphism, i.e., \emph{ch} is one-to-one and onto, and satisfies $$ch(\psi\circ\phi)=ch(\psi)ch(\phi).$$
\end{prop}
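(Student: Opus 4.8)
The plan is to verify the three assertions separately—additivity, bijectivity, and multiplicativity—reducing the multiplicative property to the factorization of induction across a direct product of groups and then invoking Proposition \ref{regular_chch} in each set of variables.

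First I would dispose of additivity and bijectivity. Additivity is immediate from the defining formula \eqref{prod_ch_formula}, which is linear in $\chi$. For bijectivity I would use Proposition \ref{prod_ch_dfn}: the characters $\chi^{Ai}\otimes\chi^{Bj}$ of the irreducibles $A^{(i)}\otimes B^{(j)}$ form a basis of the class functions on $\cS_m\times\cS_n$, and $ch$ sends them to the products $ch^m(A^{(i)})(x)\,ch^n(B^{(j)})(y)$, which are products of Schur functions and hence form a basis of $\Lambda^m(x)\otimes\Lambda^n(y)$. Thus $ch$ restricts to a linear isomorphism in each bidegree $(m,n)$, and taking the direct sum over all bidegrees yields a bijection $R\times R\to\Lambda(x)\times\Lambda(y)$.

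For multiplicativity, by bilinearity it suffices to treat $\psi$ the character of an irreducible $A^{(i)}\otimes C^{(i')}$ of $\cS_k\times\cS_l$ and $\phi$ the character of an irreducible $A'^{(j)}\otimes C'^{(j')}$ of $\cS_m\times\cS_n$. The key step is to rewrite the induction product. The domain group $(\cS_k\times\cS_l)\times(\cS_m\times\cS_n)$ is canonically isomorphic, by interchanging the two middle factors, to $(\cS_k\times\cS_m)\times(\cS_l\times\cS_n)$, and under this identification the embedding into $\cS_{k+m}\times\cS_{l+n}$ becomes the blockwise inclusion of the Young subgroup $\cS_k\times\cS_m\hookrightarrow\cS_{k+m}$ in the first coordinate (acting on the $x$-variables) and $\cS_l\times\cS_n\hookrightarrow\cS_{l+n}$ in the second (acting on the $y$-variables). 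The representation $(A^{(i)}\otimes C^{(i')})\otimes(A'^{(j)}\otimes C'^{(j')})$ likewise becomes $(A^{(i)}\otimes A'^{(j)})\otimes(C^{(i')}\otimes C'^{(j')})$, and since induction across a direct product of groups from a product of subgroups factors as an external tensor product of the two separate inductions, I obtain
$$\psi\circ\phi=\left(A^{(i)}\circ A'^{(j)}\right)\otimes\left(C^{(i')}\circ C'^{(j')}\right),$$
where the two inner occurrences of $\circ$ now denote the ordinary induction products inside $\cS_{k+m}$ and $\cS_{l+n}$.

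Applying the product characteristic via Proposition \ref{prod_ch_dfn} then gives $ch(\psi\circ\phi)=ch^{k+m}(A^{(i)}\circ A'^{(j)})(x)\,ch^{l+n}(C^{(i')}\circ C'^{(j')})(y)$; invoking the ordinary multiplicativity of Proposition \ref{regular_chch} separately in $x$ and in $y$ factors each of these two characteristics, and regrouping the four resulting factors yields $ch(\psi)\,ch(\phi)$. I expect the only genuine obstacle to be the justification of the factorization of induction over the direct product—verifying that conjugating by the factor-swap isomorphism carries the defining Young-subgroup inclusion of $\psi\circ\phi$ to the claimed blockwise inclusion, so that the transitivity of induction (or Frobenius reciprocity) applies coordinatewise. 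Once this bookkeeping is in place, every remaining step is a direct appeal to the stated propositions.
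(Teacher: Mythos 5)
Your proposal is correct and takes essentially the same route as the paper: your key step---that induction across a direct product of groups from a product of subgroups factors as the external tensor product of the two separate inductions, after swapping the middle factors---is precisely the paper's Lemma \ref{tensor_ind} (proved there by a short coset-representative computation), and the remaining applications of Proposition \ref{regular_chch} in each variable set match the paper's argument, with your reduction to irreducibles by bilinearity replacing the paper's explicit expansion of $\psi$ and $\phi$ as sums. Your explicit verification of additivity and bijectivity via the Schur-function basis is a small bonus, since the paper's proof addresses only the multiplicative identity and leaves the bijectivity claim unargued.
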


Before proving this proposition, we need to first establish a lemma:
\begin{lem} \label{tensor_ind}
If $f$ is the character of a representation of $\cS_k\times\cS_m$ and $g$ is the character of a representation of $\cS_l\times\cS_n$, then 
$$f\otimes g\uparrow_{(\cS_k\times\cS_m)\times(\cS_l\times\cS_n)}^{\cS_{k+m}\times\cS_{l+n}}=f\uparrow_{\cS_k\times\cS_m}^{\cS_{k+m}}\otimes g\uparrow_{\cS_l\times\cS_n}^{\cS_{l+n}}.$$
\end{lem}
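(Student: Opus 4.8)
The plan is to recognize Lemma \ref{tensor_ind} as a special case of a general principle: induction commutes with the external tensor product when one passes to direct products of groups. Concretely, I would set $G_1=\cS_{k+m}$ with Young subgroup $H_1=\cS_k\times\cS_m$, and $G_2=\cS_{l+n}$ with Young subgroup $H_2=\cS_l\times\cS_n$. Under these identifications the source group is $H_1\times H_2=(\cS_k\times\cS_m)\times(\cS_l\times\cS_n)$ sitting inside $G_1\times G_2=\cS_{k+m}\times\cS_{l+n}$, and the assertion becomes the clean statement $(f\otimes g)\uparrow_{H_1\times H_2}^{G_1\times G_2}=(f\uparrow_{H_1}^{G_1})\otimes(g\uparrow_{H_2}^{G_2})$. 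The point that makes the identification work without any reordering is that $f$ is carried by the ``$k,m$'' indices, both of which land in the first factor $\cS_{k+m}$, while $g$ is carried by the ``$l,n$'' indices, both of which land in the second factor $\cS_{l+n}$.

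First I would write out the induced character using the standard conjugation formula. For $(a,b)\in G_1\times G_2$,
$$\big((f\otimes g)\uparrow_{H_1\times H_2}^{G_1\times G_2}\big)(a,b)=\frac{1}{|H_1||H_2|}\sum_{\substack{(x,y)\in G_1\times G_2\\ x^{-1}ax\in H_1,\ y^{-1}by\in H_2}}f(x^{-1}ax)\,g(y^{-1}by),$$
using $|H_1\times H_2|=|H_1|\,|H_2|$ together with $(f\otimes g)(u,v)=f(u)g(v)$. The conjugacy condition $(x^{-1}ax,y^{-1}by)\in H_1\times H_2$ splits into the two independent conditions $x^{-1}ax\in H_1$ and $y^{-1}by\in H_2$, and the summand factors as a function of $x$ alone times a function of $y$ alone. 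Hence the double sum separates into the product of two single sums, each of which is precisely an induced character in its own factor, yielding $(f\uparrow_{H_1}^{G_1})(a)\,(g\uparrow_{H_2}^{G_2})(b)$, which is the right-hand side evaluated at $(a,b)$.

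The only point that requires care is the bookkeeping: one must check that $(\cS_k\times\cS_m)\times(\cS_l\times\cS_n)$ is embedded in $\cS_{k+m}\times\cS_{l+n}$ exactly as the direct product $H_1\times H_2$, so that the index, the conjugation action, and the character value all decouple across the two factors. Once this is in place, the factorization of the sum is immediate and there is no genuine obstacle. As an alternative I could give the same argument at the module level, invoking $k[G_1\times G_2]\cong k[G_1]\otimes_k k[G_2]$ and the analogous isomorphism for $H_1\times H_2$, from which $k[G_1\times G_2]\otimes_{k[H_1\times H_2]}(V\boxtimes W)\cong\big(k[G_1]\otimes_{k[H_1]}V\big)\boxtimes\big(k[G_2]\otimes_{k[H_2]}W\big)$, and then passing to characters. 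I expect to present the character computation, since it is self-contained and stays within the elementary framework used elsewhere in the paper.
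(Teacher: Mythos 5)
Your proof is correct and follows essentially the same route as the paper: both compute the induced character of $f\otimes g$ pointwise via the standard induction formula and observe that the sum factors into the product of the two induced characters, the only cosmetic difference being that the paper uses the coset-representative form of the formula while you use the $\frac{1}{|H|}\sum_{x\in G}$ averaged form. No gap; the bookkeeping point you flag (that the subgroup embeds as a genuine direct product $H_1\times H_2$ inside $G_1\times G_2$) is exactly what the paper's choice of coset representatives $\{(s_i,t_j)\}$ makes explicit.
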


\begin{proof}
Suppose $\cS_k\times\cS_m<\cS_{k+m}$ has coset representatives $\{s_1,s_2,...,s_q\}$, $q=(k+m)!/(k!m!)$, and $\cS_l\times\cS_n<\cS_{l+n}$ has coset representatives $\{t_1,t_2,...,t_r\}$, $r=(l+n)!/(l!n!)$. Then $\{(s_i, t_j)\}$, $i\in [q]$, $j\in [r]$, is a set of coset representatives for $(\cS_k\times\cS_m)\times(\cS_l\times\cS_n)< \cS_{k+m}\times\cS_{l+n}$. For $(\Gs_{k+m},\Gs_{l+n})\in \cS_{k+m}\times\cS_{l+n}$,
\begin{align*}
f\otimes g\uparrow_{(\cS_k\times\cS_m)\times(\cS_l\times\cS_n)}^{\cS_{k+m}\times\cS_{l+n}}((\Gs_{k+m},\Gs_{l+n})) &=\sum_{i,j}f\otimes g\big((s_i^{-1},t_j^{-1})(\Gs_{k+m},\Gs_{l+n})(s_i,t_j)\big)\\
& = \sum_i{f(s_i^{-1}\Gs_{k+m}s_i)}\sum_j{g(t_j^{-1}\Gs_{l+n}t_j)}\\
&=f\uparrow_{\cS_k\times\cS_m}^{\cS_{k+m}}(\Gs_{k+m})g\uparrow_{\cS_l\times\cS_n}^{\cS_{l+n}}(\Gs_{l+n})\\
&=f\uparrow_{\cS_k\times\cS_m}^{\cS_{k+m}}\otimes g\uparrow_{\cS_l\times\cS_n}^{\cS_{l+n}}((\Gs_{k+m},\Gs_{l+n})).\\
\end{align*}
The second and fourth equalities come from \cite{Sagan}*{Theorem 1.11.2}.
\end{proof}

\begin{proof}[proof of Proposition \ref{prod_ch}] Suppose $\psi={\displaystyle \sum_{i,j}}a_{ij}\psi_k^{(i)}\otimes \psi_l^{(j)}$ with $\psi_k^{(i)}$'s and $\psi_l^{(j)}$'s are irreducible characters of representations of $\cS_k$ and $\cS_l$ respectively. Similarly, $\phi ={\displaystyle \sum_{u,v}}b_{uv}\phi_m^{(u)}\otimes\phi_n^{(v)}$. For any $\Gs_k\in\cS_k$, $\Gs_l\in\cS_l$, $\Go_m\in\cS_m$, and $\Go_n\in\cS_n$, by Theorem $1.11.2$ in \textit{the Symmetric Group} (Sagan \cite{Sagan}), we have
\begin{align*}
\psi\otimes\phi\big((\Gs_k,\Gs_l),(\Go_m,\Go_n)\big) & = \big(\sum_{i,j}a_{ij}\psi_k^{(i)}(\Gs_k)\psi_l^{(j)}(\Gs_l)\big)\big(\sum_{u,v}b_{uv}\phi_m^{(u)}(\Go_m)\phi_n^{(v)}(\Go_n)\big)\\
&=\sum_{i,j,u,v}a_{ij}b_{uv}\psi_k^{(i)}(\Gs_k)\phi_m^{(u)}(\Go_m)\psi_l^{(j)}(\Gs_l)\phi_n^{(v)}(\Go_n)\\
&=\sum_{i,j,u,v}a_{ij}b_{uv}(\psi_k^{(i)}\otimes\phi_m^{(u)})\otimes(\psi_l^{(j)}\otimes\phi_n^{(v)})(\Gs_k,\Go_m,\Gs_l,\Go_n).\\
\end{align*}

Thus, $\psi\otimes\phi=\sum_{i,j,u,v}a_{ij}b_{uv}(\psi_k^{(i)}\otimes\phi_m^{(u)})\otimes(\psi_l^{(j)}\otimes\phi_n^{(v)})$. So 

\begin{align*}
\psi\circ\phi & =\psi\otimes\phi\uparrow_{(\cS_k\times\cS_l)\times(\cS_m\times\cS_n)}^{\cS_{k+m}\times\cS_{l+n}}\\
& =\sum_{i,j,u,v}a_{ij}b_{uv}(\psi_k^{(i)}\otimes\phi_m^{(u)})\otimes(\psi_l^{(j)}\otimes\phi_n^{(v)})\uparrow_{\cS_k\times\cS_m\times\cS_l\times\cS_n}^{\cS_{k+m}\times\cS_{l+n}}\\
& =\sum_{i,j,u,v}a_{ij}b_{uv}(\psi_k^{(i)}\otimes\phi_m^{(u)})\uparrow_{\cS_k\times\cS_m}^{\cS_{k+m}}\otimes (\psi_l^{(j)}\otimes\phi_n^{(v)})\uparrow_{\cS_l\times\cS_n}^{\cS_{l+n}}\\ 
& = \sum_{i,j,u,v}a_{ij}b_{uv}(\psi_k^{(i)}\circ\phi_m^{(u)})\otimes (\psi_l^{(j)}\circ\phi_n^{(v)})\\
\end{align*}
by Lemma \ref{tensor_ind}. Now take the product Frobenius characteristic of both sides of the above equation. For clarity, we keep tracks of variables $x$ and $y$. By Proposition \ref{regular_chch} we get
\begin{align*}
ch(\psi\circ\phi)(x,y)&=\sum_{i,j,u,v}a_{ij}b_{uv}ch(\psi_k^{(i)}\circ\phi_m^{(u)})(x)ch(\psi_l^{(j)}\circ\phi_n^{(v)})(y)\\
&=\sum_{i,j,u,v}a_{ij}b_{uv}ch(\psi_k^{(i)})(x)ch(\phi_m^{(u)})(x)ch(\psi_l^{(j)})(y)ch(\phi_n^{(v)})(y)\\
&=\sum_{i,j}a_{ij}ch(\psi_k^{(i)})(x)ch(\psi_l^{(j)})(y)\sum_{u,v}b_{uv}ch(\phi_m^{(u)})(x)ch(\phi_n^{(v)})(y)\\
&=ch(\psi)(x,y)ch(\phi)(x,y)\\
\end{align*} 
\end{proof}

\section{A symmetric function analogue}

Using the product Frobenius characteristic map, we arrive at our main result regarding a representation of $\cS_n\times\cS_n$ on the Segre product of the subset lattice $B_n$ with itself. We derived an equation that is analogous to a well-known symmetric function identity, see Stanley \cite{ec2}*{equation (7.13)}:\\
for $n\geq 1$, $$\sum_{i=0}^n(-1)^ie_ih_{n-i}=0.$$ The thing to note is that the elementary symmetric function $e_i$ is the Frobenius characteristic of the representation of $\cS_i$ on the top homology of $B_i$. Our theorem will give the Segre product $B_n\circ_{\rho_n}B_n$ version of this identity.

\begin{thm} \label{Stanley q-form}
For the subset lattice $B_n$ with rank function $\rho_n$, let $P_n$ be the proper part of the Segre product poset $B_n\circ_{\rho_n}B_n$. Write $\cS_n$ for the symmetric group on $[n]$. The action of $\mathcal{S}_n \times \mathcal{S}_n$ induces a representation on the reduced top homology of $P_n$. Let $ch(\tiH_{n-2}(P_n))$ be the product Frobenious characteristic of this representation. Then 
\beq \label{hhch}
{\sum_{i=0}^{n}}{(-1)^ih_{n-i}(x)h_{n-i}(y)ch(\tiH_{i-2}(P_i))}=0,
\eeq
where $h_k$'s are the complete homogeneous symmetric functions.
\end{thm}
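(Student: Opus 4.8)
The plan is to read \eqref{hhch} backwards: since the product Frobenius characteristic is an injective, additive ring homomorphism (Proposition \ref{prod_ch}), it suffices to exhibit a virtual $\cS_n\times\cS_n$-representation whose image under $ch$ is the left-hand side and then to show that this virtual representation is zero. The natural candidate is the signed sum of the \emph{Whitney homology} of the bounded poset $\widehat{Q}_n:=B_n\circ_{\rho_n}B_n$, which has $\hat 0=(\varnothing,\varnothing)$, $\hat 1=([n],[n])$, rank $n$, and proper part $P_n$. Recall that $WH_i(\widehat{Q}_n):=\bigoplus_{\rho(z)=i}\tiH_{i-2}\big((\hat 0,z)\big)$ carries an action of $G:=\cS_n\times\cS_n$. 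The whole argument is the Segre-product, $\cS_n\times\cS_n$-equivariant analogue of the classical fact that $\sum_i(-1)^ie_ih_{n-i}=0$ is the image under the ordinary characteristic of the representation identity $\sum_i(-1)^iWH_i(B_n)=0$.

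First I would identify the Whitney homology. An element $z=(A,B)$ of rank $i$ is a pair of $i$-subsets of $[n]$, and the open interval $(\hat 0,z)$ is isomorphic, $G_z$-equivariantly, to the proper part $P_i$ of $B_i\circ_{\rho_i}B_i$; its stabilizer is $G_z=(\cS_i\times\cS_{n-i})\times(\cS_i\times\cS_{n-i})$, acting on $\tiH_{i-2}(P_i)$ through the two $\cS_i$ factors and trivially on the two $\cS_{n-i}$ factors. Summing over the orbit of such $z$ gives, as $G$-modules,
\[
WH_i(\widehat{Q}_n)\;\cong\;\tiH_{i-2}(P_i)\circ \mathbf{1}_{\cS_{n-i}\times\cS_{n-i}},
\]
the induction product of Section 2. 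Applying $ch$, the ring-homomorphism property of Proposition \ref{prod_ch}, and $ch(\mathbf{1}_{\cS_{n-i}\times\cS_{n-i}})=h_{n-i}(x)h_{n-i}(y)$ yields $ch\big(WH_i(\widehat{Q}_n)\big)=h_{n-i}(x)h_{n-i}(y)\,ch\big(\tiH_{i-2}(P_i)\big)$, which is exactly the $i$-th summand of \eqref{hhch} up to the sign $(-1)^i$.

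The heart of the proof, and the step I expect to be the main obstacle, is the equivariant vanishing $\sum_{i=0}^n(-1)^iWH_i(\widehat{Q}_n)=0$ in the representation ring of $G$. Its numerical shadow is merely $\sum_z\mu(\hat 0,z)=0$, but a $G$-equivariant refinement is needed. Here I would use that $\widehat{Q}_n$ is Cohen--Macaulay (Bj\"orner--Welker \cite{Segre_rees}*{Theorem 1}), so each interval $(\hat 0,z)$ has reduced homology concentrated in its top degree $i-2$; hence for $i\ge 1$, in the representation ring, $\tiH_{i-2}((\hat 0,z))=(-1)^i\,\widetilde\chi_{G_z}((\hat 0,z))$, where $\widetilde\chi$ is the equivariant reduced Euler characteristic (the alternating sum of the reduced chain groups, which equals the alternating sum of homology over a field of characteristic $0$). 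Pulling out the rank-zero term, which is the one-dimensional trivial $G$-module, and regrouping the rest by $z$ converts $\sum_i(-1)^iWH_i(\widehat{Q}_n)$ into $\mathbf{1}_{G}+\sum_{\hat 0<z\le\hat 1}\mathrm{Ind}_{G_z}^{G}\widetilde\chi_{G_z}((\hat 0,z))$.

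Finally I would evaluate the sum $\sum_{\hat 0<z\le\hat 1}\mathrm{Ind}_{G_z}^{G}\widetilde\chi_{G_z}((\hat 0,z))$, the equivariant refinement of $\sum_{\hat 0<z\le\hat 1}\mu(\hat 0,z)=-1$, by stratifying the order complex of $(\hat 0,\hat 1]=\widehat{Q}_n\setminus\{\hat 0\}$ according to the maximal element of each chain: a chain with top element $z$ is a chain of $(\hat 0,z)$ with $z$ appended, which identifies, degree by degree and $G$-equivariantly, the reduced chain complex of $(\hat 0,\hat 1]$ with a degree-shifted $\bigoplus_z\mathrm{Ind}_{G_z}^{G}$ of the reduced chain complexes of the $(\hat 0,z)$. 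Taking equivariant Euler characteristics expresses $\widetilde\chi_G((\hat 0,\hat 1])$ through the $\widetilde\chi_{G_z}((\hat 0,z))$; but $(\hat 0,\hat 1]$ has maximum $\hat 1$, so its order complex is a cone with $G$-fixed apex, hence $G$-contractible with $\widetilde\chi_G=0$. The bookkeeping then gives $\sum_{\hat 0<z\le\hat 1}\mathrm{Ind}_{G_z}^{G}\widetilde\chi_{G_z}((\hat 0,z))=-\mathbf{1}_{G}$, which cancels the rank-zero term and yields $\sum_i(-1)^iWH_i(\widehat{Q}_n)=0$; applying the injective homomorphism $ch$ produces \eqref{hhch}. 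The delicate points are the $G_z$-equivariance of the isomorphism $(\hat 0,z)\cong P_i$ (so the action genuinely factors as $\tiH_{i-2}(P_i)$ induced with a trivial complementary factor) and the sign and degree-shift bookkeeping in the cone argument, including the empty-chain term and the conventions at $i=0,1$.
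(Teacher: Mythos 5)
Your proposal is correct, and its skeleton coincides with the paper's: decompose via the Whitney homology of the Segre product, identify $WH_i$ with the induction product $\tiH_{i-2}(P_i)\circ 1_{\cS_{n-i}\times\cS_{n-i}}$ using transitivity of $\cS_n\times\cS_n$ on each rank level and the $G_z$-equivariant isomorphism $(\hat 0,z)\cong P_i$, then apply Proposition \ref{prod_ch} together with $ch(1_{\cS_{n-i}})=h_{n-i}$. The one genuine difference lies in the key lemma. The paper quotes Sundaram's theorem (via Wachs) that for the Cohen--Macaulay poset $Q=P_n\cup\{\hat 0\}$ one has $\tiH_{n-2}(P_n)\cong_{\cS_n\times\cS_n}\bigoplus_{r=0}^{n-1}(-1)^{n-1+r}WH_r(Q)$, which is exactly your vanishing $\sum_{i=0}^{n}(-1)^iWH_i(\widehat{Q}_n)=0$ after moving the top term $WH_n(\widehat{Q}_n)=\tiH_{n-2}(P_n)$ across the equality; you instead reprove this identity from scratch. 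Your sign and boundary conventions check out: $WH_0=\tiH_{-2}(\varnothing)=1_G$; for $i\geq 1$ Cohen--Macaulayness of intervals gives $\tiH_{i-2}((\hat 0,z))=(-1)^i\widetilde{\chi}_{G_z}((\hat 0,z))$ (including $i=1$, where $(\hat 0,z)=\varnothing$ and $\tiH_{-1}=1_{G_z}$); and the stratification of chains of $(\hat 0,\hat 1]$ by maximal element identifies chain groups only degree by degree --- a filtration rather than a map of complexes --- which is all an Euler-characteristic computation requires, with the empty chain in degree $-1$ supplying the $-1_G$ that cancels the rank-zero term. This is, in substance, Sundaram's own proof of the lemma the paper cites, so what your route buys is self-containedness: the equivariant sharpening of $\sum_{\hat 0\leq z\leq \hat 1}\mu(\hat 0,z)=0$ is proved rather than invoked, at the cost of bookkeeping the paper gets for free. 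One cosmetic remark: injectivity of $ch$ is never needed --- you apply $ch$ to a virtual module shown to vanish, so additivity alone yields \eqref{hhch}.
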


\begin{proof}
Let $Q$ be $P_{n}\cup{\hat{0}}$, which is Cohen-Macaulay. We consider the Whitney homology of $Q$. The action of $S_{n} \times S_{n}$ on $Q$ induces a representation of $S_{n} \times S_{n}$ on the reduced top homology of $Q$ and its Whitney homology groups. From the work of Sundaram on Whitney homology (Sundaram \cites{Sheila1, Sheila2}, Wachs \cite{Wachs_notes}), we know that 

$$\tiH_{n-2}(P_{n})\cong_{S_{n}\times S_{n}}{\displaystyle \bigoplus_{r=0}^{n-1}}(-1)^{n-1+r}\mbox{WH}_r(Q).$$ 

 Let $x$ be a rank $r$ element of $Q$. Then the stabilizer of $x$ is the young subgroup $(S_{r}\times S_{n-r})\times (S_{r}\times S_{n-r})$. Viewing the Whitney homology groups as representations,   

$$\mbox{WH}_r(Q)={\displaystyle \bigoplus_{x\in Q_r/(S_{n}\times S_{n})}}\tiH_{r-2}(\hat{0},x)\uparrow_{(S_{r}\times S_{n-r})\times (S_{r}\times S_{n-r})}^{S_{n} \times S_{n}},$$ 

where $Q_r$ is the set of rank $r$ elements in $Q$ and $Q_r/(S_{n} \times S_{n})$ is a set of orbit representatives (see Lecture $4.4$ in Wachs' \textit{Poset Topology} \cite{Wachs_notes}). The action of $S_{n} \times S_{n}$ on $Q_r$ is transitive. So the contribution of the $rth$ Whitney homology to $\tiH_{n-2}(P_{n})$ is the induced representation $\tiH_{r-2}(\hat{0},x)\uparrow_{(S_{r}\times S_{n-r})\times (S_{r}\times S_{n-r})}^{S_{n} \times S_{n}}$ for any $x$ in $Q_r$. The open interval $(\hat{0}, x)$ is isomorphic to the poset $P_r$. We then have

$$\tiH_{n-2}(P_{n})\cong_{S_{n}\times S_{n}}{\displaystyle \bigoplus_{r=0}^{n-1}}(-1)^{n-1+r}\tiH_{r-2}(P_{r})\uparrow_{(S_{r}\times S_{n-r})\times (S_{r}\times S_{n-r})}^{S_{n} \times S_{n}}.$$ 

Hence, \beq\label{ch_Whitney} ch(\tiH_{n-2}(P_{n}))={\displaystyle \sum_{r=0}^{n-1}}(-1)^{n-1+r}ch\big(\tiH_{r-2}(P_{r})\uparrow_{(S_{r}\times S_{n-r})\times (S_{r}\times S_{n-r})}^{S_{n} \times S_{n}}\big).\eeq 

Now we would like to relate $ch(\tiH_{r-2}(P_{r}))$ with the product Frobenius characteristic of the representation induced to $\mathcal{S}_n\times \mathcal{S}_n$. Let $\psi_r$ be the character of the $(S_r\times S_r)$-module $\tiH_{r-2}(P_{r})$. Write $1_{S_{n-r}\times S_{n-r}}$ for the character of the trivial representation of ${S_{n-r}\times S_{n-r}}$. When viewing $\tiH_{r-2}(P_{r})$ as a $(S_{r}\times S_{n-r})\times (S_{r}\times S_{n-r})$-module, its character equals $\psi_r\otimes 1_{S_{n-r}\times S_{n-r}}$ (Sagan, \cite{Sagan}*{Theorem 1.11.2}). Let $\psi_r\circ 1_{S_{n-r}\times S_{n-r}}$ denote the induction product of $\psi_r$ and $1_{S_{n-r}\times S_{n-r}}$. Then
\begin{align*}
\tiH_{r-2}(P_{r})\uparrow_{(S_{r}\times S_{n-r})\times (S_{r}\times S_{n-r})}^{S_{n} \times S_{n}} & =\psi_r\otimes 1_{S_{n-r}\times S_{n-r}}\uparrow_{(S_{r}\times S_{n-r})\times (S_{r}\times S_{n-r})}^{S_{n} \times S_{n}}\\
 & =\psi_r\circ 1_{S_{n-r}\times S_{n-r}}.
\end{align*}

It follows from Proposition \ref{prod_ch} that the product Frobenius characteristic $$ch(\psi_r\circ 1_{S_{n-r}\times S_{n-r}})=ch(\psi_r)ch(1_{S_{n-r}\times S_{n-r}}).$$ 

Thus, equation (\ref{ch_Whitney}) becomes 

\beq\label{ch_H_decomp}\begin{split}
ch(\tiH_{n-2}(P_{n})) & ={\displaystyle \sum_{r=0}^{n-1}}{(-1)^{n-1+r}ch(\tiH_{r-2}(P_r))ch(1_{S_{n-r}\times S_{n-r}})}\\
& = {\displaystyle \sum_{r=0}^{n-1}}{(-1)^{n-1+r}ch(\tiH_{r-2}(P_r))ch(1_{S_{n-r}})(x)ch(1_{S_{n-r}})(y)}.
\end{split}\eeq

It is known that the Frobenius characteristic of the trivial representation of $\mathcal{S}_n$ is $h_n$ (Stanley \cite{ec2}). Multiplying both sides of equation (\ref{ch_H_decomp}) by $(-1)^{n-1}$, we get

$$(-1)^{n-1}ch(\tiH_{n-2}(P_{n})) ={\displaystyle \sum_{r=0}^{n-1}}{(-1)^{r}ch(\tiH_{r-2}(P_r))h_{n-r}(x)h_{n-r}(y)}.$$
Finally, we conclude that $$\sum_{i=0}^{n}{(-1)^ih_{n-i}(x)h_{n-i}(y)ch(\tiH_{i-2}(P_i))}=0.$$

\end{proof}

Theorem \ref{Stanley q-form} was motivated by our initial findings regarding the $q$-analogue of equation \eqref{CSV_intro}. Once we formulated the specialization of $ch(\tiH_{i-2}(P_i))$, the $q$-analogue can be retrieved by taking the stable principal specialization of equation \eqref{hhch}, suggesting the truth of Theorem \ref{Stanley q-form}. 

\section{The $q$-analogue of a Carlitz, Scoville, and Vaughan's result}

Let $ps:\Lambda \longrightarrow \QQ [q]$ be the stable principal specialization. For a symmetric function $f$, $ps(f)$ is defined to be $f(1,q,q^2,...)$. A summary of the specializations of different bases for the symmetric functions can be found in Stanley's \textit{Enumerative Combinatorics vol. $2$} \cite{ec2}*{proposition $7.8.3$}. Consider a symmetric function $f$ in two sets of variables $(x_1,x_2,...)$ and $(y_1,y_2,...)$. We take the stable principal specialization of $f$ in each set of variables, that is substituting $(1,q,q^2,...)$ for both $(x_1,x_2,...)$ and $(y_1,y_2,...)$. The product Frobenius characteristic of the $\cS_i\times \cS_i$-modules $\tiH_{i-2}(P_i)$ are symmetric functions in two sets of variables. Then it is natural to ask what we can say about their specializations. It turns out that $ps(ch(\widetilde{H}_{n-2}(P_n)))$ has interesting relations with the Euler characteristic of the Segre product of the subspace lattice $B_n(q)$ with itself. Recall the definition of $B_n(q)$. Let $q$ be a prime power and $\mathbb{F}_q$ be the finite field of $q$ elements. Consider the $n$-dimensional linear vector space $\mathbb{F}_q^n$ and its subspaces, then $B_n(q)$ is the lattice of those subspaces ordered by inclusion.   

$B_n(q)$ is a geometric lattice whose every subspace is a span of its atoms (Stanley \cite{ec1}*{Example 3.10.2}). It is graded with a rank function $\rho(W)=$ the dimension of the subspace $W$. Recall that an \emph{edge labeling} of a bounded poset $P$ is a map $\lambda: \cE(P)\longrightarrow \Lambda$, where $\cE(P)$ is the set of edges of the covering relations $x\coveredby y$ of $P$ and $\Lambda$ is some poset. We can define a labeling of $B_n(q)$ in the following steps:

\textbf{1.} For a $1$-dimensional subspace $X$ of $\mathbb{F}_q^n$ (an atom of $B_n(q)$), let $x$ be a basis element of $X$.  Let $A$ denote the set of all atoms of the subspace lattice $B_n(q)$. We define a map 
$f$: $A \longrightarrow [n]$,  $f(X)=$ the index of the right most non-zero coordinate of $x$. For example, in $B_3(3)$, if $X=$ span of $\{<1,0,1>\}$, $Y=$ span of $\{<2,1,0>\}$, $f(X)=3$ and $f(Y)=2$.

\textbf{2}. For $X$ any subspace of $\mathbb{F}_q^n$, let $A(X)$ denote the set of atoms whose span is $X$. Let $Y$ be an element of $B_n(q)$ that covers $X$, then $A(Y)\supset A(X)$. Denote the set $f(A(Y))$\textbackslash $f(A(X))$ by $\mathcal L$. Let $\rho$ be the rank function of $B_n(q)$, which is defined by the dimensions of the subspaces. Since $\rho(Y)-\rho(X)=1$, the set $\mathcal L$ is a subset of $[n]$ and has exactly one element. This element will be the label of the edge $(X,\,Y)$. 

A maximal chain $c=(\hat{0}\coveredby x_1\coveredby \cdots \coveredby x_t\coveredby \hat{1})$ is \emph{increasing} if $\lambda (\hat{0},x_1)<\lambda(x_1,x_2)<\cdots<\lambda(x_t,\hat{1})$. A chain $c$ is then associated with a word $$\lambda (c)=\lambda (\hat{0},x_1)\lambda(x_1,x_2)\cdots\lambda(x_t,\hat{1}).$$ If $\lambda (c_1)$ lexicographically precedes $\lambda (c_2)$, we say that $c_1$ lexicographically precedes $c_2$ and we denote this by $c_1<_Lc_2$.
Next we recall the definition of an EL-labeling of a poset:
\begin{dfn} \label{EL} (Bj\"orner and Wachs \cite{BW2}*{Definition 2.1}) An edge labeling is called an \textit{EL-labeling} (edge lexicographical labeling) if for every interval $[x, y]$ in $P$,\\
(1) there is a unique increasing maximal chain $c$ in $[x, y]$, and\\
(2) $c<_Lc'$ for all other maximal chains $c'$ in $[x, y]$.
\end{dfn}

\begin{prop} \label{B_n(q)_EL} The labeling method described above is an EL-labeling on the subspace lattice $B_n(q)$.\end{prop}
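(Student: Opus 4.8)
The plan is to reduce both EL-conditions, for an arbitrary interval $[X,Y]$ of $B_n(q)$, to a single normal-form computation. For a subspace $W$ write $\mathrm{piv}(W):=f(A(W))\subseteq[n]$ for its \emph{pivot set}. First I would record the basic properties of $\mathrm{piv}$: reading coordinates from the right, Gaussian elimination produces an echelon basis $\{v_p:p\in\mathrm{piv}(W)\}$ of $W$ whose members have pairwise distinct pivots (each $v_p$ having rightmost nonzero coordinate at position $p$). From this one checks the key formula $f\big(\sum_p c_p v_p\big)=\max\{p:c_p\neq 0\}$, because the summand of largest pivot contributes a nonzero rightmost entry that the summands of smaller pivot cannot cancel. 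Two consequences follow at once: $|\mathrm{piv}(W)|=\dim W$, and $U\subseteq W$ implies $\mathrm{piv}(U)\subseteq\mathrm{piv}(W)$. In particular the label of a cover $X\coveredby Z$ is the unique element of $\mathrm{piv}(Z)\setminus\mathrm{piv}(X)$, so along any maximal chain $X=Z_0\coveredby\cdots\coveredby Z_{b-a}=Y$ the pivot sets strictly increase and the labels telescope: the label word of the chain is an arrangement of the fixed set $T:=\mathrm{piv}(Y)\setminus\mathrm{piv}(X)$, independent of the chain.

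Granting this telescoping property, the EL-conditions become elementary. Since every maximal chain of $[X,Y]$ has label word equal to some arrangement of $T=\{t_1<\cdots<t_d\}$, a chain is increasing exactly when its word is the sorted tuple $t_1t_2\cdots t_d$, and the sorted tuple is strictly lexicographically smaller than every other arrangement of $T$. Hence it suffices to prove that $[X,Y]$ contains exactly one maximal chain with increasing labels: its uniqueness gives condition (1), and because its word is the lex-minimal arrangement of $T$ while every other maximal chain realizes a different arrangement, condition (2) follows for free.

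To produce and pin down this chain I would work with a relative echelon basis. Extend an echelon basis $\{u_p:p\in\mathrm{piv}(X)\}$ of $X$ by vectors $\{y_t:t\in T\}$ with $y_t$ of pivot $t$; since the resulting $b$ vectors have distinct pivots exhausting $\mathrm{piv}(Y)$, they form a basis of $Y$. A cover $Z=X+\langle w\rangle$ of $X$ inside $[X,Y]$ is determined by the line $w+X$ in $Y/X$, and writing $w\equiv\sum_{t\in T}c_t y_t\pmod X$, the formula of the first paragraph shows that the label of $Z$ is $\max\{t\in T:c_t\neq 0\}$. Consequently the only cover of $X$ in $[X,Y]$ carrying the label $\min T=t_1$ is $Z_1=X+\langle y_{t_1}\rangle$. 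Iterating inside $[Z_1,Y]$, where $\mathrm{piv}(Y)\setminus\mathrm{piv}(Z_1)=T\setminus\{t_1\}$, builds the increasing chain $Z_i=X+\langle y_{t_1},\dots,y_{t_i}\rangle$ and simultaneously shows, by induction on $b-a$, that it is the only one: any increasing chain must begin with label $t_1$, hence with $Z_1$, and then restrict to an increasing chain of $[Z_1,Y]$.

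The main obstacle is the normal-form step underpinning everything else: setting up the right-to-left echelon basis of $Y$ so that it refines a chosen echelon basis of $X$, and verifying the two load-bearing formulas — that $f$ of a linear combination reads off the rightmost occupied pivot, and that the label of $X+\langle w\rangle$ equals the largest $T$-pivot appearing in $w$. Once these are in place, the pivot count $|\mathrm{piv}(W)|=\dim W$, the telescoping of labels, and the inductive uniqueness of the increasing chain are routine linear algebra. I anticipate no separate difficulty with condition (2), since lex-minimality of the sorted word is automatic once the label set of each interval is shown to be chain-independent.
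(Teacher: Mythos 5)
Your proof is correct and follows essentially the same route as the paper's: both argue that every maximal chain of $[X,Y]$ has label word an arrangement of the fixed set $f(A(Y))\setminus f(A(X))$, construct the unique increasing chain greedily by adjoining the smallest remaining label at each step, and obtain lex-minimality from the first-difference (sorted-word) comparison. Your echelon-basis normal form moreover supplies the linear-algebra details (the pivot formula, $|f(A(W))|=\dim W$, and the uniqueness of the cover of $X$ in $[X,Y]$ carrying the minimal label) that the paper's proof asserts without justification, so your version is the same argument made fully rigorous.
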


\begin{proof} Edges in the same chain cannot take duplicate labels since $\FF_q^n$ is $n$-dimensional and any maximal chain must take all labels in $\{1,2,\dots,n\}$. Let $[X, Y]$ be a closed interval in $B_n(q)$. All maximal chains of $[X,\, Y]$ will take labels from the set $\mathcal L = f(A(Y))$\textbackslash $f(A(X))$.  Let $0<a_1<a_2<\cdots <a_l\leq n$ be all the elements of $\mathcal L$. For each $i$, $1\leq i \leq l$, there is a $1$-dimensional subspace $V_i$ of $\mathbb{F}_q^n$ with $f(V_i)=a_i$ and $V_i\vee X $, the join of $V_i$ and $X$, is in $[X, Y]$. The chain $c=(X\coveredby X\vee V_1\coveredby \cdots \coveredby X\vee V_1\vee V_2\vee \cdots \vee V_l=Y)$ is an increasing maximal chain of $[X, Y]$. Any other $1$-dimensional subspace $V_i'$ satisfying $f(V_i')=a_i$ and $X\vee V_1\vee \cdots V_{i-1}\vee V_i' \in [X,Y]$ must equal $X\vee V_1\vee \cdots \vee V_i$. Since there is only one way to arrange the $a_i$'s increasingly, $c$ satisfies definition \ref{EL} condition (1).

Suppose there is another maximal chain $c'=(X=W_0\coveredby W_1\coveredby\cdots\coveredby W_l=Y)$. Let $f(A(W_i))$\textbackslash $f(A(W_{i-1})) = {b_i}$ for all $i\in [l]$. Let $k$, $1\leq k\leq l$, be the smallest integer such that $b_k\neq a_k$. We know that $b_k$ must be in $\cL$ and $b_k\neq a_1, a_2,\dots , a_k$. Also $a_1, a_2,..., a_{k}$ are the smallest $k$ elements of $\cL$ arranged increasingly. It follows immediately that $b_k > a_k$. Therefore condition (2) in the above definition is also satisfied.

\end{proof}

Under this EL-labeling, each maximal chain of the subspace lattice $B_n(q)$ can then be identified with a permutation $\sigma$ of $S_n$. See section \ref{Intro} for the definition of inversion statistic $inv(\Gs)$.  

\begin{lem} \label{Chains same perm}
The number of maximal chains of $B_n(q)$ assigned label $\sigma \in S_n$ is $q^{inv(\sigma)}$.
\end{lem}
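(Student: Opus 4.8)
The plan is to count the maximal chains carrying the label $\sigma$ by building the corresponding flag one cover at a time from the bottom and multiplying the number of choices at each level. First I would observe that a maximal chain $c=(\hat 0 = X_0 \coveredby X_1 \coveredby \cdots \coveredby X_n = \mathbb{F}_q^n)$ is assigned the label $\sigma$ precisely when $f(A(X_i)) = \{\sigma(1),\dots,\sigma(i)\}$ for every $i$, because the edge label $\lambda(X_{i-1},X_i)$ is the unique element of $f(A(X_i)) \setminus f(A(X_{i-1}))$ and this set grows by exactly one element at each cover. Thus the entire sequence of ``pivot sets'' $f(A(X_i))$ is forced by $\sigma$, and the number of chains with label $\sigma$ equals the number of flags realizing this prescribed sequence, which I would compute as a product over $i$ of the number of admissible one-step extensions of $X_{i-1}$.

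The core step is to count, for a fixed subspace $X$ of dimension $i-1$ with $f(A(X)) = T := \{\sigma(1),\dots,\sigma(i-1)\}$, the covers $X \coveredby X'$ with $f(A(X')) = T \cup \{s\}$, where $s=\sigma(i)\notin T$. Here I would use the reduced echelon basis of $X$ adapted to $f$: for each $p\in T$ there is a unique $w_p\in X$ whose rightmost nonzero coordinate is a $1$ at position $p$ and which vanishes at every other coordinate indexed by $T$. Given any $v\notin X$, subtracting the appropriate multiples of the $w_p$ produces a reduced vector $v'\equiv v \pmod{X}$ that is zero at all positions of $T$, and the new pivot is exactly $f(v')$, the position of the rightmost nonzero coordinate of $v'$. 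Hence the admissible covers correspond to reduced vectors $v'$ whose rightmost nonzero coordinate lies at $s$: such a $v'$ is zero at every position of $T$ and at every position exceeding $s$, is nonzero at $s$, and is free at the positions below $s$ that avoid $T$. Counting these and dividing by the $q-1$ scalar multiples giving the same line, the number of admissible covers is $q^{e_i}$ with $e_i = (\sigma(i)-1) - |\{k<i : \sigma(k)<\sigma(i)\}|$. The point I expect to be the main obstacle, and which the reduced-basis argument resolves, is verifying that this count depends only on the pivot set $T$ and not on the particular subspace $X$; once the reduction is in place, $e_i$ is manifestly a function of $\sigma$ alone, so the extensions at each level are equinumerous and the product is well defined.

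Finally I would reinterpret the exponent. The quantity $e_i$ counts the values below $\sigma(i)$ that do not already occur among $\sigma(1),\dots,\sigma(i-1)$, that is, $e_i = |\{j>i : \sigma(j)<\sigma(i)\}|$, the number of inversions of $\sigma$ whose left endpoint is $i$. Summing over $i$ gives $\sum_{i=1}^{n} e_i = inv(\sigma)$, so multiplying the level-by-level counts yields $\prod_{i=1}^{n} q^{e_i} = q^{inv(\sigma)}$ maximal chains with label $\sigma$. As a consistency check, summing over all $\sigma\in S_n$ recovers $\sum_{\sigma} q^{inv(\sigma)} = [n]_q!$, the total number of complete flags in $\mathbb{F}_q^n$.
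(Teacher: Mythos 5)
Your proposal is correct and takes essentially the same route as the paper: both arguments count chains level by level, showing that a chain labeled $\sigma(1)\cdots\sigma(i-1)$ admits exactly $q^{e_i}$ one-step extensions with new label $\sigma(i)$, where $e_i=|\{j>i:\sigma(j)<\sigma(i)\}|$, and then multiply over $i$ to get $q^{inv(\sigma)}$. Your reduced-echelon normalization is a somewhat tidier implementation of the paper's choice of basis vectors with rightmost nonzero coordinate equal to $1$: it makes explicit both that the count is independent of the particular subspace $X$ and that the $q^{e_i}$ covers constructed are pairwise distinct and exhaust all covers with the prescribed pivot, points the paper's proof treats more informally.
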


\begin{proof} Let $\sigma \in S_n$, for each $1$-dimensional subspace of $\mathbb{F}_q^n$, we can take the vector whose right most non-zero coordinate is $1$ as its basis element. For each $i\in [n-1]$, let $inv(\Gs(i))$ denote the number of pairs $(i,j)$ such that $1\leq i< j\leq n$ and $\sigma (i)>\sigma(j)$. The number of ways to choose an atom $W_1$ such that the edge $(0,W_1)$ takes label $\Gs(1)$ is clearly $q^{inv(\Gs(1))}$. Let $k\in [n]$, assume the chain $0\coveredby W_1\coveredby ...\coveredby W_{k-1}$, has label $\sigma (1) \sigma (2)...\sigma (k-1)$. For each $i\in [k-1]$, pick an atom $V_i\in A(W_{k-1})$ with $f(V_i)=\Gs(i)$ and $v_i$ the basis element of $V_i$. The vectors $v_1,\, v_2,\,...,\,v_{k-1}$ are linearly independent hence form a basis of $W_{k-1}$. In order for the edge $(W_{k-1}, W_k)$ to take label $\Gs(k)$, $W_k$ needs to be the join of $W_{k-1}$ and an atom whose basis element, call it $v_k$, has $1$ on the $\Gs(k)$th coordinate and all $0$'s after the $\Gs(k)$th coordinate. Then $v_1,\, v_2,\,...,\,v_k$ will form a basis for $W_k$. So we need to find the number of ways to choose a $v_k$ that each results in a distinct $W_k$.

The vector $e_k=<0,...,0,1,0,...0>$ who has $1$ on the $\Gs(k)$th coordinate and $0$ everywhere else certainly is a choice for $v_k$. For each $j$, such that $1\leq k<j\leq n$ and $\Gs(k)>\Gs(j)$, the $\Gs(j)$th coordinate appears before the $\Gs(k)$th. $W_{k-1}$ has no vectors whose right most non-zero coordinate is the $\Gs(j)$th, so varying the $\Gs(j)$th coordinate of $e_k$ will produce new vectors that are not in the span of $\{v_1,...,v_{k-1},e_k\}$. There are $inv(\Gs(k))$ choices for $j$, and for each $j$, there are $q$ choices for the value of the $j$th coordinate. Each choice will produce a different $v_k$ thus a different $W_k$. Therefore for any given chain $0\coveredby W_1\coveredby ...\coveredby W_{k-1}$ assigned label $\Gs(1)\Gs(2)...\Gs(k-1)$, there are $q^{inv(\Gs(k))}$ choices for $W_k$ such that the edge $(W_{k-1},W_k)$ takes label $\Gs(k)$. Hence the number of maximal chains assigned label $\Gs$ is $\prod^{i=n}_{i=1}{q^{inv(\Gs(i))}}=q^{\sum^{i=n}_{i=1}{inv(\Gs(i))}}=q^{inv(\Gs)}$.
\end{proof}
The following theorem from Bj\"orner and Wachs is essential to connecting the permutations of $\cS_n$ with the Segre product poset $B_n(q)\circ_{\rho} B_n(q)$:
\begin{thm} \label{EL_homology} (Bj\"orner and Wachs \cite{BW4}*{Theorem 4.1}, see also Wachs \cite{Wachs_notes}*{Theorem 3.2.4}). Suppose $P$ is a poset for which $\hat{P}$ admits an EL-labeling. Then $P$ has the homotopy type of a wedge of spheres, where the number of $i$-spheres is the number of decreasing maximal $(i+2)$-chains of $\hat{P}$. The decreasing maximal $(i+2)$-chains, with $\hat{0}$ and $\hat{1}$ removed, form a basis for homology $\tiH_i(P;\ZZ)$.
\end{thm}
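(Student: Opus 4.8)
The plan is to deduce the statement from the theory of lexicographic shellability. Write $\bar{P}$ for the proper part of $\hat{P}$, that is $\hat{P}$ with its bottom $\hat{0}$ and top $\hat{1}$ removed, and let $\Delta=\Delta(\bar{P})$ be its order complex; this is the order complex of $P$ whose homotopy type we must compute, and its facets are exactly the maximal chains of $\hat{P}$ with their two endpoints deleted. A maximal chain $c\colon \hat{0}=x_{0}\coveredby x_{1}\coveredby\cdots\coveredby x_{t}\coveredby x_{t+1}=\hat{1}$ carries a label word $\lambda(c)=a_{1}a_{2}\cdots a_{t+1}$ with $a_{i}=\lambda(x_{i-1},x_{i})$, and corresponds to the $(t-1)$-dimensional facet $\{x_{1},\dots,x_{t}\}$ of $\Delta$. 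I would order the facets by the lexicographic order $<_{L}$ on their label words and prove that this ordering is a shelling of $\Delta$. Since an EL-labeling does not presuppose that $\hat{P}$ is graded, I would work in the nonpure shelling framework of Bj\"orner and Wachs, in which a shellable complex is homotopy equivalent to a wedge of spheres carrying one sphere of dimension $\dim F$ for every facet $F$ whose restriction $R(F)$ equals $F$ (see Wachs \cite{Wachs_notes}).

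The combinatorial core is to identify the restriction map and verify the shelling condition. For a chain $c$ as above set $D(c)=\{\,i : a_{i}>a_{i+1}\,\}$ and $R(c)=\{\,x_{i} : i\in D(c)\,\}$. The tool throughout is the EL property applied to the length-two subintervals $[x_{i-1},x_{i+1}]$: by condition $(1)$ of Definition \ref{EL} each such interval has a unique increasing maximal chain, and by condition $(2)$ that chain is $<_{L}$-least among the chains of the interval. If $i\in D(c)$, then $x_{i-1}\coveredby x_{i}\coveredby x_{i+1}$ is not increasing, so its first label strictly exceeds the first label of the increasing chain on $[x_{i-1},x_{i+1}]$; replacing $x_{i}$ by the middle element of that increasing chain therefore yields a maximal chain $c'<_{L}c$ that still contains every element of $c$ except $x_{i}$. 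Consequently every face of $c$ that omits a descent element already lies in an earlier facet, so the faces of $c$ belonging to no earlier facet are exactly those containing $R(c)$.

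The step I expect to be the main obstacle is the converse: that $R(c)$ itself lies in no earlier facet, so that it really is the unique minimal new face. This cannot be settled by a single local move and instead uses the global EL hypothesis. Assuming $R(c)\subseteq c'$ for some maximal chain $c'<_{L}c$, I would compare $c$ and $c'$ on each interval cut out by consecutive elements of $R(c)$ (together with $\hat{0}$ and $\hat{1}$); since $c$ is increasing on each such interval, uniqueness of increasing chains forces $c'$ to agree with $c$ there, whence $\lambda(c')\geq_{L}\lambda(c)$, contradicting $c'<_{L}c$. Granting this, $<_{L}$ is a shelling with restriction map $R$, and the homology facets, those with $R(c)=c$, are precisely the maximal chains whose label word is strictly decreasing, i.e. the decreasing maximal chains. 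A decreasing maximal chain of length $i+2$ has $i+3$ elements, so after deleting $\hat{0}$ and $\hat{1}$ it is an $i$-dimensional facet contributing an $i$-sphere, which matches the asserted count.

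It remains to see that the decreasing chains, with $\hat{0}$ and $\hat{1}$ removed, form a basis of $\widetilde{H}_{i}(P;\ZZ)$. Shellability already gives that $\widetilde{H}_{*}(\Delta)$ is free with rank in each degree equal to the number of homology facets of that dimension (Wachs \cite{Wachs_notes}), so it suffices to exhibit an explicit basis. To each decreasing $i$-chain I would associate the fundamental cycle of the $i$-sphere attached when that facet is shelled on; expressing these cycles in terms of the decreasing $i$-simplices and ordering by $<_{L}$, one checks that the resulting change-of-basis matrix is unitriangular, so the decreasing chains themselves, read as oriented $i$-simplices of $\Delta$, represent a basis of $\widetilde{H}_{i}(P;\ZZ)$. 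This completes the proof.
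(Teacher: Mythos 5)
The paper itself offers no proof of this theorem: it is quoted from Bj\"orner and Wachs \cite{BW4} (see also Wachs \cite{Wachs_notes}*{Theorem 3.2.4}), so the only meaningful comparison is with the standard proof in those sources, and your reconstruction does follow it: lexicographic order on maximal chains is a shelling, the restriction of a facet is its descent set, and the homology facets are exactly the decreasing chains. Two points in your shelling verification need tightening, though both are repairable. In the converse step, uniqueness of increasing chains alone does not ``force $c'$ to agree with $c$'' on the intervals cut out by $R(c)$ --- a chain $c'\supseteq R(c)$ need not be increasing on those intervals. What you need is condition (2) of Definition \ref{EL}: on the first interval where $c'$ differs from $c$, the restriction of $c$ is the unique increasing chain and hence \emph{strictly} lexicographically least there, so $\lambda(c')>_L\lambda(c)$. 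Second, since you correctly place yourself in the nonpure framework, the interval $[x_{i-1},x_{i+1}]$ need not have length two: ``the middle element of the increasing chain'' should be ``the increasing chain itself,'' which may be longer than $x_{i-1}\coveredby x_i\coveredby x_{i+1}$, and comparing label words of different lengths requires the Bj\"orner--Wachs lexicographic convention for such words. You should also note that the unique increasing maximal chain of $[\hat0,\hat1]$ is the first facet, so every later facet has a nonempty descent set and the shelling intersections are nonempty and pure of codimension one.

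The one genuine gap is your final paragraph. A decreasing chain, read as a single oriented $i$-simplex, has nonzero boundary, so it is not a cycle and cannot ``represent'' a class in $\widetilde{H}_i(P;\ZZ)$; there is no unitriangular change of basis carried out inside the cycle group. What your triangularity computation actually proves is the \emph{cohomology} statement: a decreasing chain is a facet of the order complex, so its dual cochain is automatically a cocycle, and the pairing matrix $\langle \rho_F, G^*\rangle$ between the fundamental cycles $\rho_F$ and the duals $G^*$ of decreasing simplices is unitriangular in the shelling order, because any decreasing simplex $G\neq F$ in the support of $\rho_F$ lies in some earlier facet and, being itself a facet, must equal that earlier facet. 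This is precisely how the result is stated in Wachs \cite{Wachs_notes}*{Theorem 3.2.4}, namely as a basis of $\widetilde{H}^i(P;\ZZ)$; the paper's ``homology'' is a loose paraphrase, and for homology the correct assertion is that the fundamental cycles indexed by the decreasing chains form a basis --- which you already possess from shellability, with no further argument needed. Since the paper only ever uses the count of decreasing chains (the Betti number $W_n(q)$), nothing downstream is affected, but your proof as written conflates chains with cycles at exactly the step you should state most carefully.
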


Now consider the Segre product of $B_n(q)$ with itself. Denote the proper part of the Segre product by $P_n(q)$. Using the labeling of $B_n(q)$ described right before definition \ref{EL}, the Segre product of $B_n(q)$ with itself admits an edge-labeling in which the labels are ordered pairs from the poset $[n]\times [n]$. A label $(i,j)\in [n]\times [n] \leq (k,l)$ if and only if $i\leq k$ and $j\leq l$. It is easy to verify that this labeling of $B_n(q)\circ_{\rho} B_n(q)$ is an EL-labeling. The descending chains are labeled with pairs of permutations with no common ascent. Given a pair of permutations $(\Gs, \Go)$, the number of descending maximal chains assigned label $(\Gs, \Go)$ is $q^{inv(\Gs)}\cdot q^{inv(\Go)}$ from Lemma \ref{Chains same perm}. Recall that $\cD_n$ denotes the set of pairs of permutations $(\Gs,\Go)\in \cS_n\times\cS_n$ with no common ascent. Then we immediately arrive at the following proposition:
\begin{prop} 
Let $W_n(q)$ be the total number of descending maximal chains of $P_n(q)$. 
Then
$$W_n(q)=\sum_{(\Gs,\Go)\in\cD_n}{q^{(inv(\Gs)+inv(\Go))}}.$$
\end{prop}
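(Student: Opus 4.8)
The plan is to reduce the enumeration of descending maximal chains in the Segre product to the chain count in a single copy of $B_n(q)$ that Lemma \ref{Chains same perm} already provides. First I would pin down the shape of a maximal chain in $B_n(q)\circ_{\rho}B_n(q)$. An element is a pair $(U,V)$ of subspaces with $\dim U=\dim V$, and a covering relation forces both dimensions to increase by one simultaneously; hence a maximal chain from $\hat{0}$ to $\hat{1}$ in the Segre product is precisely a pair $(c_1,c_2)$ of maximal chains of $B_n(q)$ that ascend in lockstep. Because every maximal chain of $B_n(q)$ has the same length $n$, this pairing is unconstrained: any $c_1$ may be matched with any $c_2$, so the maximal chains of the Segre product are exactly the ordered pairs of maximal chains of $B_n(q)$.

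Next I would translate the product edge-labeling. By construction the label of the $k$th edge of $(c_1,c_2)$ is the pair $(\lambda(e_k^1),\lambda(e_k^2))\in[n]\times[n]$, where $e_k^i$ is the $k$th edge of $c_i$. By Proposition \ref{B_n(q)_EL} the label word of $c_1$ is a permutation $\sigma\in\cS_n$ and that of $c_2$ is a permutation $\omega\in\cS_n$, so the Segre chain carries the label word $(\sigma(1),\omega(1)),\dots,(\sigma(n),\omega(n))$. In the componentwise order on $[n]\times[n]$, and since $\sigma$ and $\omega$ are permutations (so consecutive entries always differ in each coordinate), the step from position $i$ to $i+1$ is an ascent exactly when $\sigma(i)<\sigma(i+1)$ and $\omega(i)<\omega(i+1)$, that is, exactly when $i$ is a common ascent of $\sigma$ and $\omega$. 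Therefore the Segre chain is descending if and only if $(\sigma,\omega)$ has no common ascent, i.e. $(\sigma,\omega)\in\cD_n$.

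Then I would count. Fixing a label $(\sigma,\omega)\in\cD_n$, a descending chain with that label is a pair $(c_1,c_2)$ in which $c_1$ has label word $\sigma$ and $c_2$ has label word $\omega$, and by the synchronization remark these two factors may be chosen independently. Lemma \ref{Chains same perm} supplies $q^{inv(\sigma)}$ choices for $c_1$ and $q^{inv(\omega)}$ choices for $c_2$, giving $q^{inv(\sigma)}q^{inv(\omega)}=q^{inv(\sigma)+inv(\omega)}$ descending chains with label $(\sigma,\omega)$. Summing over all labels, which range over $\cD_n$ by the previous paragraph, yields
$$W_n(q)=\sum_{(\sigma,\omega)\in\cD_n}q^{inv(\sigma)+inv(\omega)}.$$

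The one genuinely delicate point, and the step I expect to require the most care, is the descent translation. Since $[n]\times[n]$ is only partially ordered, a descending chain cannot in general mean a strictly decreasing label word; I would confirm that the correct reading is "no ascending step" in the componentwise order, and that this coincides with the combinatorial notion of having no common ascent. Once that convention is fixed, the identification of maximal Segre chains with independent pairs of maximal $B_n(q)$-chains, combined with Lemma \ref{Chains same perm}, makes the remaining enumeration routine.
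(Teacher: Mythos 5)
Your proposal is correct and follows essentially the same route as the paper, which derives the proposition from the product edge-labeling of $B_n(q)\circ_{\rho}B_n(q)$, the identification of descending chains with pairs of permutations having no common ascent, and the count $q^{inv(\Gs)}\cdot q^{inv(\Go)}$ per label from Lemma \ref{Chains same perm}. Your treatment is in fact slightly more careful than the paper's, since you make explicit both the bijection between maximal Segre chains and independent pairs of maximal chains of $B_n(q)$ and the convention that ``descending'' means no ascending step in the componentwise partial order on $[n]\times[n]$ --- a point the paper leaves implicit.
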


\begin{rem}
The Segre product poset $B_n(q)\circ B_n(q)$ is the $q$-analogue of the Segre product poset $B_n\circ B_n$, agreeing with the formal definition of a $q$-analogue in R. Simion's paper \cite{Simion}. She showed that the $q$-analogue of an EL-shellable poset is also EL-shellable. This particular EL-labeling of $B_n(q)\circ B_n(q)$ provided intuition and a combinatorial interpretation for $W_n(q)$.
\end{rem}

\begin{thm} \label{q-form}
Let $P_n(q)$ be the proper part of the Segre product poset $B_n(q)\circ_{\rho} B_n(q)$. Let ${n\brack i}_q$ be the $q$-analogue of ${n\choose i}$ and $W_n(q)$ be the total number of descending maximal chains of $P_n(q)$. Then
\beq\label{W_n(q)}\sum_{i=0}^{i=n}{{n\brack i}_q^2 (-1)^i W_i(q)}=0.
\eeq
\end{thm}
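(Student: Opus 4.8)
The plan is to realize $W_n(q)$ as a signed reduced Euler characteristic of $P_n(q)$ and then to read the identity off the defining recurrence of the Möbius function on the bounded poset $\hat{P}_n(q):=B_n(q)\circ_\rho B_n(q)$. The first step is to record that the product-order EL-labeling of $\hat{P}_n(q)$ described just before the theorem, together with Theorem \ref{EL_homology}, shows that $P_n(q)$ is homotopy equivalent to a wedge of spheres whose top-dimensional ones are indexed by the descending maximal chains. Since $\hat{P}_n(q)$ is pure of rank $n$, every maximal chain has the same length, so all of these spheres have dimension $n-2$ and the lower reduced homology of $P_n(q)$ vanishes. Hence $\dim\tiH_{n-2}(P_n(q))=W_n(q)$ and $\tilde\chi(P_n(q))=(-1)^{n-2}W_n(q)=(-1)^n W_n(q)$. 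By Philip Hall's theorem this reduced Euler characteristic equals the Möbius number $\mu_{\hat{P}_n(q)}(\hat{0},\hat{1})$, so $\mu(\hat{0},\hat{1})=(-1)^n W_n(q)$.

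Next I would identify every lower interval in $\hat{P}_n(q)$ with a smaller Segre product. An element of rank $r$ is a pair $x=(U,V)$ of $r$-dimensional subspaces of $\FF_q^n$, and the interval $[\hat{0},x]$ is exactly the Segre product $B(U)\circ_\rho B(V)$ of the subspace lattices below $U$ and $V$; since $B(U)\cong B(V)\cong B_r(q)$, this interval is isomorphic to $\hat{P}_r(q)$. Applying the previous paragraph at rank $r$ gives $\mu(\hat{0},x)=(-1)^r W_r(q)$ for every $x$ of rank $r$ (the cases $r=0,1$ agreeing with the familiar values $\mu(\hat{0},\hat{0})=1$ and $\mu(\hat{0},\mathrm{atom})=-1$, together with $W_0(q)=W_1(q)=1$). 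I would also count the rank-$r$ elements: the number of $r$-dimensional subspaces of $\FF_q^n$ is the Gaussian binomial ${n\brack r}_q$, so the number of pairs $(U,V)$ of rank $r$ is ${n\brack r}_q^2$.

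With these two ingredients the identity falls out of the Möbius recurrence. Since $\hat{0}\ne\hat{1}$, we have $\sum_{\hat{0}\le x\le\hat{1}}\mu(\hat{0},x)=0$ in $\hat{P}_n(q)$. Grouping the sum according to the rank $r=\rho(x)$, which ranges over $0,\dots,n$, and substituting $\mu(\hat{0},x)=(-1)^r W_r(q)$ together with the count ${n\brack r}_q^2$ of rank-$r$ elements (the term $r=n$ contributing the single element $\hat{1}$ with ${n\brack n}_q^2=1$) yields exactly
\[
\sum_{r=0}^{n}{n\brack r}_q^2 (-1)^r W_r(q)=0,
\]
which is equation \eqref{W_n(q)}.

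The step that needs the most care is the first one: pinning down the precise relationship, with correct sign, between the combinatorially defined $W_n(q)$ and the Möbius number. This rests on two facts that must be checked rather than assumed — that the product-order labeling of $\hat{P}_n(q)$ really is an EL-labeling (so that Theorem \ref{EL_homology} applies and descending maximal chains form a homology basis), and that $\hat{P}_n(q)$ is pure, so that homology is concentrated in the single top degree $n-2$ and the Euler characteristic collapses to $(-1)^{n-2}W_n(q)$. Both hold here: purity is immediate from the grading by rank, and the Cohen-Macaulayness of the Segre product (Bj\"orner and Welker) is consistent with the EL-labeling asserted before the theorem. As an alternative to the Möbius computation, one can instead mirror the proof of Theorem \ref{Stanley q-form}: taking dimensions in the Whitney homology decomposition of $P_n(q)\cup\{\hat{0}\}$ produces the recursion $W_n(q)=\sum_{r=0}^{n-1}(-1)^{n-1+r}{n\brack r}_q^2 W_r(q)$, which rearranges to the same identity.
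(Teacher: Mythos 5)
Your proposal is correct and takes essentially the same route as the paper's own proof: you identify $(-1)^nW_n(q)$ with the M\"obius number $\mu_{\widehat{P_n(q)}}(\hat{0},\hat{1})$ via the EL-labeling, Theorem \ref{EL_homology} and Philip Hall's theorem, and then expand the M\"obius recurrence over lower intervals isomorphic to $\widehat{P_r(q)}$, counted by ${n\brack r}_q^2$. The only cosmetic difference is that you get concentration of homology in degree $n-2$ directly from purity of the shellable poset, whereas the paper invokes Cohen--Macaulayness for the same vanishing; these are interchangeable here.
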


\begin{proof} The poset $P_n(q)$ is pure. By Theorem \ref{EL_homology}, $P_n(q)$ has the homotopy type of a wedge of $(n-2)$-spheres, and its decreasing maximal $(n-2)$-chains form a basis of the reduced $(n-2)$-nd homology. Now we have $W_n(q)$ is the $n$th betti number of $P_n(q)$. The Euler-Poincar\'e formula \cite{Wachs_notes}*{Theorem 1.2.8} gives us 
\beq \label{EulerChar_P_n(q)} 
\widetilde{\chi}(\Delta(P_n(q)))=\sum_i {(-1)^i b_i(P_n(q))}
\eeq
where $b_i(P_n(q))$ is the $i$th betti number of $P_n(q)$. We know that the mobius number of a poset is the same as its reduced Euler Characteristic by the Philip Hall's theorem (Stanley \cite{ec1}*{Proposition 3.8.6}), so equation \ref{EulerChar_P_n(q)} now becomes 
$$\mu_{\widehat{P_n(q)}}(\hat{0},\hat{1})=\sum_i {(-1)^i b_i(P_n(q))}.$$
However, $P_n(q)$ is Cohen-Macaulay which means all reduced homology groups other than the top one vanish (Bj\"orner \cite{B_Shell_CM}). Thus $\mu_{\widehat{P_n(q)}}$ simplifies to $(-1)^n b_n(P_n(q))$, which is $(-1)^nW_n(q)$ in our set up. We then have 
\beq \label{Euler}
\mu_{\widehat{P_n(q)}}=(-1)^nW_n(q)=\widetilde{\chi}(\Delta(P_n(q))).
\eeq

On the other hand, by the definition of the m\"obius function, $$\mu(\hat{0},\hat{1})=-\sum_{\hat{0}\leq x<\hat{1}}{\mu(\hat{0}, x)}.$$ Each $x$ in $P_n(q)$ is a subspace of $\mathbb{F}_q^n\times \mathbb{F}_q^n$, which is the product of two $k$-dimensional subspaces $X_1, X_2$ of $\mathbb{F}_q^n$ for some $k$ with $0\leq k<n$. But the intervals $[\hat{0},X_1]$ and $[\hat{0}, X_2]$ are isomorphic to the poset $B_k(q)$, hence $\mu(\hat{0}, x)$ is just $\mu_{\widehat{P_k(q)}}(\hat{0}, \hat{1})$, where $P_k(q)= B_k(q)\circ_{\rho} B_k(q)\setminus \{\hat{0},\hat{1}\}$. The number of $k$-dimensional subspaces of $\mathbb{F}_q^n$ is ${n\brack k}_q$ (Stanley \cite{ec1}*{Proposition 1.7.2}), the $q$-analogue of $n\choose k$. Thus the number of distinct $x=(X_1,X_2)$ where $X_1$ and $X_2$ are $k$-dimensional subspaces is ${n\brack k}_q^2$. Therefore we have 
$$\mu_{\widehat{P_n(q)}}(\hat{0},\hat{1})=-\sum_{i=0}^{i=n-1}{{n\brack i}_q^2\mu_{\widehat{P_i(q)}}(\hat{0},\hat{1})}=-\sum_{i=0}^{i=n-1}{{n\brack i}_q^2 (-1)^iW_i(q)},$$ where $W_i(q)=\sum_{(\Gs,\Go)\in \mathcal{S}_i\times \mathcal{S}_i}{q^{inv(\Gs)+inv(\Go)}}$, which is summed over all pairs of permutations with no common ascent, is the number of descending maximal chains of the Segre product poset $B_i(q)\circ_{\rho_i}B_i(q)$.
\end{proof}

\begin{cor} \label{W_n(q)_is_Euler}
The Euler characteristic of the Segre product of the subspace lattice $B_n(q)\circ_{\rho} B_n(q)$ is $(-1)^nW_n(q)$.
\end{cor}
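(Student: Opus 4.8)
The plan is to obtain this corollary essentially for free, by reading off the identity that was already established in the course of proving Theorem \ref{q-form}. The point to settle first is a matter of definition: by the Euler characteristic of the Segre product $B_n(q)\circ_{\rho} B_n(q)$ we mean the reduced Euler characteristic $\widetilde{\chi}(\Delta(P_n(q)))$ of the order complex of its proper part $P_n(q)$, which is the invariant that matches the Möbius number $\mu_{\widehat{P_n(q)}}(\hat{0},\hat{1})$ via Philip Hall's theorem. This is precisely the quantity evaluated in equation \eqref{Euler}.

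I would then spell out the two ingredients that feed into \eqref{Euler}. Since $P_n(q)$ is the proper part of a Segre product of two copies of the Cohen-Macaulay lattice $B_n(q)$, it is itself Cohen-Macaulay by Bj\"orner and Welker's theorem, so all of its reduced homology groups vanish except in the top degree $n-2$; there Theorem \ref{EL_homology} identifies the rank of $\widetilde{H}_{n-2}(P_n(q))$ with the number of descending maximal chains, namely $W_n(q)$. Feeding this into the Euler--Poincar\'e formula \eqref{EulerChar_P_n(q)} collapses the alternating sum of Betti numbers to the single surviving term $(-1)^{n-2}b_{n-2}(P_n(q))=(-1)^n W_n(q)$, which is exactly the content of \eqref{Euler}. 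The corollary is then just the statement $\widetilde{\chi}(\Delta(P_n(q)))=(-1)^n W_n(q)$.

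There is no real obstacle here, as the substantive work—Cohen-Macaulayness and the EL-labeling count of descending chains—was already carried out. The only point deserving care is the bookkeeping of the homological degree against the rank $n$ of $B_n(q)$: a maximal chain in the proper part $P_n(q)$ has $n-1$ elements, so $\Delta(P_n(q))$ has dimension $n-2$ and the nonvanishing homology sits in degree $n-2$, which is what pins down the sign $(-1)^{n-2}=(-1)^n$ in the final answer.
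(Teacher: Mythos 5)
Your proposal is correct and follows exactly the paper's route: the paper's proof of Corollary \ref{W_n(q)_is_Euler} simply cites equation \eqref{Euler}, which is established inside the proof of Theorem \ref{q-form} by the same chain of reasoning you spell out (Cohen-Macaulayness killing lower homology, Theorem \ref{EL_homology} identifying the top Betti number with $W_n(q)$, and the Euler--Poincar\'e formula plus Philip Hall's theorem). Your one added care point---that the surviving homology sits in degree $n-2$, giving the sign $(-1)^{n-2}=(-1)^n$---is in fact a small correction of the paper's own slightly loose phrasing ``$(-1)^n b_n(P_n(q))$'', so it is worth keeping.
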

\begin{proof}
See equation \eqref{Euler} in the proof of theorem \ref{q-form}.
\end{proof} 

Recall that $P_n$ is the proper part of the Segre product of the subset lattice $B_n$ with itself. The product Frobenious characteristic of the $\cS_n\times \cS_n$-module $\tiH_{n-2}(P_n)$ has an innate connection with $W_n(q)$. The following theorem provides an equation that connects the stable principal specialization of $ch(\tiH_{n-2}(P_n))$ and the Euler characteristic $W_n(q)$.

\begin{thm} \label{sp_ch_pn}
Let $P_n$ be the proper part of Segre product of $B_n$ with itself and $\cS_n$ the symmetric group. The action of $\mathcal{S}_n \times \mathcal{S}_n$ induces a representation on the reduced top homology of $P_n$. Let $W_n(q)$ be the number of descending maximal chains of the Segre product of $B_n(q)$ with itself. For a symmetric function $f$ in two sets of variables $x=(x_1, x_2, \dots)$ and $y=(y_1,y_2,\dots)$, the stable principal specialization $ps(f)$ specializes both $x_i$ and $y_i$ to $q^{i-1}$. Then $$ps(ch(\widetilde{H}_{n-2}(P_n)))=\frac{W_n(q)}{\prod_{i=1}^{n}{(1-q^i)^2}},$$ where $ch(V)$ is the product Frobenious characteristic of a $\cS_n\times\cS_n$-module $V$.
\end{thm}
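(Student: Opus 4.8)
The plan is to apply the stable principal specialization $ps$ to the symmetric function identity \eqref{hhch} of Theorem~\ref{Stanley q-form}, compute the resulting $q$-series, and recognize it as precisely the recurrence satisfied by $W_n(q)\big/\prod_{i=1}^{n}(1-q^i)^2$ coming from Theorem~\ref{q-form}. Since $ps$ is a ring homomorphism acting on the $x$- and $y$-variable sets independently, applying it termwise to \eqref{hhch} is legitimate; I will then show that the two sequences obey identical triangular recurrences with matching initial data, which forces them to coincide.

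First I would record the specialization of the coefficients. By Stanley~\cite{ec2}*{Proposition 7.8.3}, $ps(h_k)=h_k(1,q,q^2,\dots)=\prod_{i=1}^{k}(1-q^i)^{-1}$. Because $ps$ specializes the two variable sets separately and $h_{n-i}(x)h_{n-i}(y)$ factors accordingly,
\[
ps\big(h_{n-i}(x)h_{n-i}(y)\big)=\frac{1}{\prod_{j=1}^{n-i}(1-q^j)^2}.
\]
Writing $a_i:=ps\big(ch(\tiH_{i-2}(P_i))\big)$ and applying $ps$ to \eqref{hhch}, I obtain, for every $n\geq 1$,
\[
\sum_{i=0}^{n}(-1)^i\,\frac{a_i}{\prod_{j=1}^{n-i}(1-q^j)^2}=0.
\]

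Next I would put \eqref{W_n(q)} into the same shape. From $[k]_q!=\prod_{j=1}^{k}(1-q^j)\big/(1-q)^k$ one checks that ${n\brack i}_q=\prod_{j=1}^{n}(1-q^j)\big/\Big(\prod_{j=1}^{i}(1-q^j)\prod_{j=1}^{n-i}(1-q^j)\Big)$, so dividing \eqref{W_n(q)} by $\big(\prod_{j=1}^{n}(1-q^j)\big)^2$ gives, for every $n\geq 1$,
\[
\sum_{i=0}^{n}(-1)^i\,\frac{1}{\prod_{j=1}^{n-i}(1-q^j)^2}\cdot\frac{W_i(q)}{\prod_{j=1}^{i}(1-q^j)^2}=0.
\]
Setting $\tilde a_i:=W_i(q)\big/\prod_{j=1}^{i}(1-q^j)^2$, this is exactly the specialized identity above with $\tilde a_i$ in place of $a_i$. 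Both recurrences are triangular: the $i=n$ summand carries coefficient $(-1)^n$, so each uniquely determines its $n$-th term from the terms of lower index, and it suffices to match the initial data. For the base case I would check $a_0=\tilde a_0=1$: with the standard convention $\tiH_{-2}(P_0)=k$, the trivial $\cS_0\times\cS_0$-module with $ch=1$, we get $a_0=ps(1)=1$, while $W_0(q)=1$ gives $\tilde a_0=1$ (equivalently one verifies $n=1$ directly, where $\tiH_{-1}(P_1)$ is the trivial $\cS_1\times\cS_1$-module and $W_1(q)=1$, so $a_1=\tilde a_1=(1-q)^{-2}$). Forming $d_i:=a_i-\tilde a_i$ and subtracting the two recurrences, the $d_i$ satisfy the homogeneous triangular system $\sum_{i=0}^{n}(-1)^i d_i\big/\prod_{j=1}^{n-i}(1-q^j)^2=0$ with $d_0=0$, which forces $d_n=0$ for all $n$ by induction. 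Hence $a_n=\tilde a_n$, which is the asserted identity $ps\big(ch(\tiH_{n-2}(P_n))\big)=W_n(q)\big/\prod_{i=1}^{n}(1-q^i)^2$.

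The step I expect to be the main obstacle is not the algebra but reconciling the degenerate conventions at the bottom of the recursion: one must fix the meaning of $\tiH_{-2}(P_0)$ and of $W_0(q)$ so that the $i=0$ terms of the two recurrences genuinely agree, and confirm that this is the same convention implicitly used in the proofs of Theorems~\ref{Stanley q-form} and~\ref{q-form} (where the $r=0$ Whitney-homology term and the $i=0$ M\"obius term appear). Once these initial values are reconciled, uniqueness of the triangular recursion closes the argument at once.
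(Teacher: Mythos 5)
Your proposal is correct and takes essentially the same route as the paper's proof: both apply $ps$ to \eqref{hhch}, use $ps(h_k)=\prod_{j=1}^{k}(1-q^j)^{-1}$ to recognize the specialized coefficients as ${n\brack i}_q^2$ after clearing $\prod_{j=1}^{n}(1-q^j)^2$, and close the recursion with Theorem \ref{q-form}; your ``uniqueness of the triangular system'' framing is the paper's induction in different clothing. The only presentational difference is at the bottom of the recursion, where the paper hand-verifies the cases $n=2$ and $n=3$ while you instead fix the degenerate conventions $\tiH_{-2}(P_0)$ and $\tiH_{-1}(P_1)$ (consistently with the $r=0$ Whitney homology term and the $i=0$ M\"obius term used in the paper), which is a legitimate and arguably cleaner way to anchor the same induction.
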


\begin{proof}
We will prove the proposition by induction. The base cases $n=2$ and $n=3$ can be verified by hand. 
$$ps(ch(\tiH_0(P_2)))=\frac{q^2+2q}{(1-q)^2(1-q^2)^2}=\frac{W_2(q)}{(1-q)^2(1-q^2)^2}$$ and 
$$ps(ch(\tiH_1(P_3)))=\frac{q^6+4q^5+6q^4+6q^3+2q^2}{(1-q)^2(1-q^2)^2(1-q^3)^2}=\frac{W_3(q)}{(1-q)^2(1-q^2)^2(1-q^3)^2}.$$ Assume that the statement is true for $P_i, i=1,...,n-1$. Now let us consider the reduced top homology of $P_{n}$. Equation (\ref{hhch}) gives us a way to express $ch(\tiH_{n-2}(P_n))$ in terms of Frobenius characteristic of smaller posets. That is

\beq \label{ch}
ch(\tiH_{n-2}(P_n))= \sum_{i=0}^{n-1}(-1)^{n-1+i}h_{n-i}(x)h_{n-i}(y)ch(\tiH_{i-2}(P_i))
\eeq

Then we take the stable principal specialization of both sides of equation (\ref{ch}). We know from Stanley's \textit{Enumerative Combinatorics vol. 2} that $ps(h_{n})=\prod_{i=1}^{n}{\frac{1}{1-q^i}}$ \cite{ec2}. It follows from our induction hypothesis that

\beq \label{ps_to_W_n(q)}
\begin{split}
ps(ch(\tiH_{n-2}(P_n))) & = \sum_{i=0}^{n-1}(-1)^{n-1+i}ps(ch(\tiH_{i-2}(P_i)))\prod_{j=1}^{n-i}{\frac{1}{(1-q^j)^2}}\\
& = \sum_{i=0}^{n-1}(-1)^{n-1+i}\frac{W_i(q)}{\prod_{k=1}^{i}{(1-q^k)^2}}\prod_{j=1}^{n-i}{\frac{1}{(1-q^j)^2}}\\
& = \frac{1}{\prod_{k=1}^{n}{(1-q^k)^2}}\cdot \sum_{i=0}^{n-1}{(-1)^{n-1+i}W_i(q)\frac{\prod_{j=i+1}^{n}{(1-q^j)^2}}{\prod_{j=1}^{n-i}{(1-q^j)^2}}}\\
& = \frac{1}{\prod_{k=1}^{n}{(1-q^k)^2}}\cdot \sum_{i=0}^{n-1}{(-1)^{n-1+i}W_i(q){n \brack i}_q^2}.
\end{split}
\eeq

Finally, using the identity involving the Euler characteristic $W_n(q)$ given in theorem \ref{q-form}, we obtain $$ps(ch(\tiH_{n}(P_{n})))=\frac{W_{n}(q)}{\prod_{j=1}^{n}{(1-q^j)^2}}.$$
\end{proof}

\section{Carlitz, Scoville and Vaughan's result and its alternative proof}

In Carlitz, Scoville and Vaughan's paper `Enumeration of pairs of permutations' \cite{CSV}, they gave the coefficients $\Go_k$ of the reciprocal of the Bessel function $J_0(z)$ a combinatorial explanation. They showed that $\omega_k$ is the number of pairs of $k$-permutations with no common ascent. When letting $q=1$ in our $q$-analogue \eqref{W_n(q)}, the subspaces of $\FF_q^n$ simply become subsets of $\{1,2,...,n\}$, and $W_n(1)=\sum_{(\Gs, \Go)\in \cS_n\times \cS_n} 1^{inv(\Gs)+inv(\Go)}$, where $(\Gs, \Go)$ is a pair of permutations with no common ascent, is in fact $\omega_n$. Hence we obtained the above result from Carlitz, Scoville and Vaughan. 

The proof of theorem \ref{q-form} can also be easily adapted to an alternative proof of Carlitz, Scoville and Vaughan's result \eqref{CSV_intro} by changing $B_n(q)$ to $B_n$, using $P_n$ instead of $P_n(q)$ to denote the Segre product, and recognizing that the intervals in the alternating sum for the M\"obius number of $\widehat{P_n}$ are isomorphic to smaller subset lattices $B_i$'s. Carlitz, Scoville and Vaughan's proof in \cite{CSV} included general cases where occurrences of common ascent are allowed. Our proof provides a less technical approach by utilizing Bj\"{o}rner and Wach's work on shellability and poset homology \cite{BW4}.  

\section{Acknowledgements}

The author would like to thank Washington University in St. Louis Department of Mathematics and Statistics for their support. She is especially grateful to John Shareshian for his insightful advice and valuable comments. 

\bibliography{References-Yifei}

\end{document}